
\documentclass[11pt,letter,margin=1in]{article}

\usepackage{fullpage}
\usepackage{amssymb,amsmath,amsthm}
\usepackage{epsfig}
\usepackage{url}
\usepackage{mathtools}
\usepackage{breqn}

\usepackage{color}

\makeatletter
\let\@@citation@@=\citation
\renewcommand{\citation}[1]{\@@citation@@{#1}%
\@for\@tempa:=#1\do{\@ifundefined{cit@\@tempa}%
  {\global\@namedef{cit@\@tempa}{}}{}}%
}
\makeatother

\usepackage{hyperref}

\makeatletter
\def\@lbibitem[#1]#2#3\par{%
  \@ifundefined{cit@#2}{}{\@skiphyperreftrue
  \H@item[%
    \ifx\Hy@raisedlink\@empty
      \hyper@anchorstart{cite.#2\@extra@b@citeb}%
        \@BIBLABEL{#1}%
      \hyper@anchorend
    \else
      \Hy@raisedlink{%
        \hyper@anchorstart{cite.#2\@extra@b@citeb}\hyper@anchorend
      }%
      \@BIBLABEL{#1}%
    \fi
    \hfill
  ]%
  \@skiphyperreffalse}%
  \if@filesw
    \begingroup
      \let\protect\noexpand
      \immediate\write\@auxout{%
        \string\bibcite{#2}{#1}%
      }%
    \endgroup
  \fi
  \ignorespaces
  \@ifundefined{cit@#2}{}{#3}}

\def\@bibitem#1#2\par{%
  \@ifundefined{cit@#1}{}{\@skiphyperreftrue\H@item\@skiphyperreffalse
  \Hy@raisedlink{%
    \hyper@anchorstart{cite.#1\@extra@b@citeb}\relax\hyper@anchorend
    }}%
  \if@filesw
    \begingroup
      \let\protect\noexpand
      \immediate\write\@auxout{%
        \string\bibcite{#1}{\the\value{\@listctr}}%
      }%
    \endgroup
  \fi
  \ignorespaces
  \@ifundefined{cit@#1}{}{#2}}
\makeatother

\newtheorem{thm}{Theorem}
\newtheorem{cor}[thm]{Corollary}
\newtheorem{lem}[thm]{Lemma}
\newtheorem{prop}[thm]{Proposition}
\newtheorem{claim}[thm]{Claim}

\theoremstyle{definition}

\newtheorem{defi}[thm]{Definition}
\newtheorem{example}[thm]{Example}
\newtheorem*{example*}{Example}





\mathcode`l="8000
\begingroup
\makeatletter
\lccode`\~=`\l
\DeclareMathSymbol{\lsb@l}{\mathalpha}{letters}{`l}
\lowercase{\gdef~{\ifnum\the\mathgroup=\m@ne \ell \else \lsb@l \fi}}%
\endgroup



\begin{document}

\title{At most $3.55^n$ stable matchings}
\author{Cory Palmer\thanks{Department of Mathematical Sciences, University of Montana, Missoula, Montana 59812, USA. E-mail: \texttt{cory.palmer@umontana.edu.}} \and D\"om\"ot\"or P\'alv\"olgyi\thanks{MTA-ELTE Lend\"ulet Combinatorial Geometry Research Group, Institute of Mathematics, E\"otv\"os Lor\'and University, Budapest, Hungary. E-mail: \texttt{domotorp@gmail.com}.}
}
\maketitle

\begin{abstract}
We improve the upper bound for the maximum possible number of stable matchings among $n$ jobs and $n$ applicants from $131072^n+O(1)$ to $3.55^n+O(1)$.
To establish this bound, we state a novel formulation of a certain entropy bound that is easy to apply and may be of independent interest in counting other combinatorial objects.
\end{abstract}



\pagenumbering{arabic}

\section{Introduction}

A {\it stable matching instance} is a complete bipartite graph $G$ with an $n$-vertex class of {\it jobs}\footnote{In this paper we have decided to use the terms of {\it jobs} and {\it applicants} in place of the more traditional {\it men} and {\it women}; a similar approach was used in \cite{DM}.} and an $n$-vertex class of {\it applicants}$^1$ where each vertex has a strict order of {\it preferences} on the vertices in the opposite class.
Given a perfect matching 
in $G$, a pair $u,v$ is {\it unstable} if $u$ and $v$ both prefer each other to their currently matched pair. A perfect matching that contains no unstable pairs is called a {\it stable matching}.
Gale and Shapley \cite{GS} showed that a stable matching exists in every stable matching instance, and gave an efficient algorithm to find one; see also \cite[Chapter 1]{KT}.
Since their introduction in 1962 by Gale and Shapley, stable matchings have found a wide variety of practical applications including college admissions \cite{GS}, hospital residences \cite{MW,R1,R2}, kidney exchanges \cite{B1,B2}, and even pure math proofs \cite{Galvin}.
Shapley and Roth were awarded the 2012 Nobel Memorial Prize in Economic Sciences for their work on stable matchings.

A long-standing problem, first posed by Knuth in 1976 \cite{Knuth} (see also Manlove \cite[Section 2.2.2]{M}) is to determine the maximum possible number $SM(n)$ of stable matchings in a stable matching instance with $n$ jobs and $n$ applicants. This problem is interesting in its own right and also has implications for the run time of certain algorithms \cite{Kavitha}. 

A very easy lower bound for $SM(n)$ is $2^{n/2}$, while the best-known lower bound $\Omega(2.28^n)$ is due to Irving and Leather \cite{IL}. This bound holds when $n$ is a power of $2$, but for all other $n$, Thurber \cite{Th} gave a slightly weaker bound of $2.28^n/c^{\log n}$.

The trivial upper bound on the number of stable matchings is $n!$. This was improved to $O(n!/c^n)$ by Strathoupolos \cite{St} and $\frac{3}{4}n!$ by Drgas-Burchardt and \'Switalski \cite{DBS}.
Despite considerable attention \cite{BCK,DBS,Hw,St,Th}, the first simply exponential upper bound was obtained only recently by Karlin, Gharan and Weber \cite{KGW}. In that paper, the authors remark that their argument gives a bound of $c^n=2^{17n}$ for $n$ large enough, but that their method will not give a $c$ close to the best-known lower bound. Their approach is a probabilistic argument, similar to one of the early proofs of the Crossing Lemma \cite{ACNS}.
A recent survey of Cechl\'arov\'a,
Cseh and Manlove~\cite{CCM} highlights the importance of further improvements to the upper bound. 

Our main result is to improve the upper bound for the maximum possible number of stable matchings; the base of our exponent gets close to $2.28$, the base in the best lower bound.

\begin{thm}\label{thm:main}
The number of stable matchings $SM(n)$ in an instance with $n$ jobs and $n$ applicants satisfies
\[
SM(n) \leq 3.55^n +O(1).
\]
\end{thm}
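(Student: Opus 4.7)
The plan is to upper bound $\log_2 SM(n)$ by the Shannon entropy $H(M)$ of a uniformly random stable matching $M$ in the given instance, and then control $H(M)$ by combining entropy subadditivity with the novel entropy inequality announced in the abstract, Lemma~\ref{lem:whitworth}.

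First, for every applicant $a$ I would let $X_a\in\{1,\dots,n\}$ be the rank, in $a$'s preference list, of $a$'s partner under $M$. Since the tuple $(X_a)_a$ reconstructs $M$ one applicant at a time, $\log_2 SM(n)=H(M)\le\sum_a H(X_a)$, so it suffices to show $\sum_a H(X_a)\le n\log_2 3.55+O(1)$.

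Second, I would invoke the classical rotation structure of stable matchings: the Gale--Shapley applicant-optimal matching minimizes every $X_a$ simultaneously, and any other stable matching is obtained by applying a downward-closed set of rotations, each of which pushes the partners of a cyclically ordered block of applicants one position further down their preference lists. I would use this to derive a tight global constraint on the joint distribution of the $X_a$'s --- most plausibly a bound of the form $\sum_a\mathbb{E}[g(X_a)]\le cn$ for an explicit ``cost'' function $g$, or equivalently a per-rank bound like $\Pr[X_a=k]\le\alpha^{k-1}\Pr[X_a=1]$ with $\alpha$ determined by rotation geometry. To match the base $3.55$ I would also expect to need the symmetric job-side inequality, letting the two sides share the entropy budget.

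Third, with such a constraint in hand, Lemma~\ref{lem:whitworth} --- described in the abstract as an ``easy to apply'' entropy bound of independent interest --- should convert it into a tight upper bound on $\sum_a H(X_a)$ via a one-variable convex optimization; numerically solving that optimization should produce the base $3.55$, with the additive $O(1)$ absorbing discreteness and boundary effects. The principal obstacle is step two: extracting a sharp enough combinatorial inequality from the rotation poset. The Whitworth-type entropy bound is only as useful as the joint constraint that feeds into it, and to descend from the trivial $n\log_2 n$ all the way down to $n\log_2 3.55$ one cannot ignore the correlations between applicants' ranks --- any per-applicant bound that disregards the lattice structure of stable matchings will be far too weak, as the earlier history of the problem (e.g.\ the base $2^{17}$ of \cite{KGW}) demonstrates.
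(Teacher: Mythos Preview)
Your proposal has two genuine gaps that would prevent it from reaching the stated bound.

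\textbf{Misidentification of the key lemma.} You build the plan around Lemma~\ref{lem:whitworth}, calling it the ``novel entropy bound'' advertised in the abstract. It is not: Lemma~\ref{lem:whitworth} is the Whitworth identity, a classical binomial-sum identity from 1901 that the paper only uses for a side calculation in the appendix. The workhorse is Lemma~\ref{lem:main}, which is a chain-rule statement: encode each object $s$ as a tuple, reveal the coordinates in a \emph{random} order $\pi$, and bound $\log|S|$ by $\sum_i \max_{s} E_\pi[\log X_i(s,\pi)]$, where $X_i(s,\pi)$ is the number of options still compatible with the coordinates already revealed. The randomization over $\pi$ is essential and has no analogue in your outline.

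\textbf{Subadditivity is too weak, and the encoding is the wrong one.} Your first step uses plain subadditivity, $H(M)\le\sum_a H(X_a)$ with $X_a$ the rank of $a$'s partner. But each $H(X_a)$ can be of order $\log n$ (already in the untangled $n\times n$ grid, where there are only $\binom{2n}{n}$ downsets, the individual marginals are spread over $\Theta(n)$ values), so no per-coordinate constraint of the type you suggest in step two --- $\Pr[X_a=k]\le\alpha^{k-1}\Pr[X_a=1]$ or $\sum_a E[g(X_a)]\le cn$ --- will bring $\sum_a H(X_a)$ down to $O(n)$. The paper does not bound marginal entropies at all. Instead it passes through Theorem~\ref{bijection-thm} to downsets of the rotation poset, encodes a downset $D$ by the $2n$-tuple of tops $\bigl(\mathrm{top}_D(M_1),\dots,\mathrm{top}_D(M_n),\mathrm{top}_D(W_{n+1}),\dots,\mathrm{top}_D(W_{2n})\bigr)$ along both the m-chains and the w-chains, and then applies Lemma~\ref{lem:main}. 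The point is that once many chains have been revealed, the top of the next chain is pinned to a short interval; averaging over the reveal order makes ``many already revealed'' the typical situation. The drop from the tangled-grid bound $11.11^n$ to $3.55^n$ then comes from two specific structural facts about the rotation poset that your outline does not touch: the pairing of Claim~\ref{prop:consecutive} (with probability $x$ the paired w-chain has already been revealed and forces $X_i=1$ outright), and the existence of four extra m-chains through the elements adjacent to $\mathrm{top}_D(M)$ (Claim~\ref{prop:two} and the discussion after it), which further shorten the residual interval. These feed into the random variable $N_x$ of Definition~\ref{def:Nx2} and the computation in Claim~\ref{prop:logY2} that yields $E_\pi[\log X_i(s,\pi)]\le 0.6331+o(1)$, hence $SM(n)\le e^{2n\cdot 0.6331+o(n)}\le 3.55^n+O(1)$.
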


Apart from its theoretical importance, this also improves the known upper bound for the running time of certain algorithms. For example, the upper bound of the running time of the best-known algorithm for the popular roommates problem with strict preferences by Kavitha \cite{Kavitha} is improved from roughly $400000^n$ to $10.65^n$.

As mentioned in \cite{KGW}, Theorem \ref{thm:main} also gives an upper bound for the so-called \emph{stable roommates problem}, defined similarly to the stable matching problem, but without requiring the graph $G$ to be bipartite.
This follows by a simple reduction to the stable matching problem where we take two copies of each vertex \cite{DM}.

\begin{cor}
The number of stable matchings among $n$ roommates is at most $3.55^n +O(1)$.
\end{cor}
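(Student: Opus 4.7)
The plan is to deduce the corollary from Theorem~\ref{thm:main} via the standard doubling reduction from stable roommates to bipartite stable matching, as indicated in the excerpt with reference \cite{DM}.

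Given an instance of the stable roommates problem on a set $V$ of $n$ people, I would construct an auxiliary bipartite stable matching instance as follows. Take two disjoint copies $V_J=\{j_v : v\in V\}$ and $V_A=\{a_v : v\in V\}$ of $V$, treated as jobs and applicants respectively. For each $v\in V$, give $j_v$ the preference list that ranks $a_u$ (for $u\neq v$) in the same order that $v$ ranks $u$ in the roommates instance, with $a_v$ appended at the very bottom; the symmetric construction defines the preference list of $a_v$ on the applicant side. This yields a stable matching instance with $n$ jobs and $n$ applicants, so Theorem~\ref{thm:main} bounds the number of stable matchings of this instance by $3.55^n+O(1)$.

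Next, I would exhibit an injection from stable roommate matchings on $V$ into stable bipartite matchings on the constructed instance. A stable roommate matching $M$ is sent to the bipartite matching $\widehat M$ consisting of the edges $(j_x,a_y)$ and $(j_y,a_x)$ for each pair $\{x,y\}\in M$. To verify stability, suppose $(j_x,a_y)$ were a blocking pair for $\widehat M$. Because $a_v$ sits at the bottom of $j_v$'s list and symmetrically, necessarily $x\neq y$ and $\{x,y\}\notin M$; but then $x$ prefers $y$ over its $M$-partner and $y$ prefers $x$ over its $M$-partner, making $\{x,y\}$ a blocking pair in the roommates instance, contradicting the stability of $M$. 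Since $M$ is trivially recoverable from $\widehat M$, the map is injective, and combining with the bound from Theorem~\ref{thm:main} gives the corollary.

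There is no substantial obstacle here beyond the reduction itself, which is short and classical; the only point requiring minor care is the extension of the preference lists over the ``self'' pair $\{j_v,a_v\}$, handled by placing these entries bottom-most so that they cannot participate in any blocking pair of $\widehat M$.
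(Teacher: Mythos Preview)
Your proposal is correct and follows exactly the route the paper indicates: the paper's entire argument for the corollary is the one-line remark that it follows from Theorem~\ref{thm:main} via the doubling reduction of \cite{DM}, and you have simply spelled that reduction out. The only cosmetic point is that if $n$ is odd a stable roommate matching leaves one person $v$ unmatched, in which case you should also include the self-edge $(j_v,a_v)$ in $\widehat M$ to make it perfect; your bottom-most placement of $a_v$ already handles this and the stability check goes through unchanged.
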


Our starting point to prove Theorem \ref{thm:main} is similar to the starting point of \cite{KGW}.
We will observe some properties of the rotation poset (to be defined in Section~\ref{sec:simple}), and then use them to give an upper bound for $SM(n)$ with a variant of the entropy method. 
Our Lemma~\ref{lem:main} may be of independent interest, as it provides a simple-to-apply method for counting combinatorial objects compared to other techniques, like those that employ entropy or the container method \cite{container}.
\\

 \textbf{Notation.} 
We write the expected value of a function $X$ of a random variable $r$ as $E_r[X(r)]$ when we want to emphasize that $r$ is the random variable---this is to avoid confusion when $X$ also depends on other (fixed) parameters.
All logarithms in this paper are natural and contain no artificial ingredients or added color. \\

\textbf{Outline of the paper.}  
In Section \ref{sec:method} we present our main result for counting objects, Lemma \ref{lem:main}, and give an example of a simple application, a short proof of Bregman's theorem.
In Section \ref{sec:simple} we first outline why earlier results \cite{GI,IL,KGW} imply that the number of stable matchings is bounded by the number of downsets of a so-called \emph{tangled grid} poset.
We then use our main lemma to prove that the number of downsets in a tangled grid is at most $11.11^n$. This gives a bound on $SM(n)$ and is a combinatorial problem of independent interest.
In Section \ref{sec:tg2} we refine these methods to prove Theorem~\ref{thm:main}.

\section{Main Lemma}\label{sec:method}

In this section we present a method for bounding the cardinality of a family $S$.
To this end, we will encode $S$ as a family of $n$-tuples from $A_1 \times A_2 \times \cdots \times A_n$ for some carefully selected sets $A_i$. We use the term {\it component} both to refer to the sets $A_i$ and to the elements $s_i$ for $s = (s_1,s_2,\dots, s_n) \in S$. It will be convenient to think of each of the components $s_i$ of $s$ as carrying some information about $s$ such that $s$ is completely determined by the values $s_1,s_2,\dots, s_n$.

If $S\subset A_1 \times \cdots \times A_n$, then a trivial upper bound on $|S|$ is $|A_1|\cdots|A_n|$.
However, if the structure of $S$ has sufficient symmetry, then we can get better bounds by revealing the components one-by-one as follows.



Fix a permutation $\pi$ of $\{1,\ldots,n\}$ and an $n$-tuple
$s=(s_1,s_2,$ $\dots, s_n) \in S$.
Imagine that the components of $s$ are revealed according to $\pi$ in the order $s_{\pi(1)},s_{\pi(2)},\ldots,s_{\pi(n)}$.
Let $X_i(s,\pi)$ count the number of possible $i$th components of members of $S$ whose first $\pi^{-1}(i)-1$ components under $\pi$ (i.e., those components that are revealed before $s_i$ is revealed) are identical to those of $s$ under $\pi$. 
That is,
\[
X_i(s,\pi)=|\{x_i \,:\, (x_1,\dots, x_n)\in S,  x_j=s_j \textit{ for } j \textit{ satisfying } \pi^{-1}(j)<\pi^{-1}(i)\}|.
\]

This immediately improves the trivial bound of $|S| \leq |A_1|\cdots |A_n|$ to 
\begin{equation}\label{easy-upper}
|S|\le \prod_{i=1}^n \max_{s \in S} X_i(s,\pi)
\end{equation}
for any permutation $\pi$. We give an example that illustrates how to compute $X_i(s,\pi)$.

\begin{example}\label{example1}
	Put
	\[
	S=\left\{(i,i,0) \,:\, 1 \leq i \leq N\right\} \cup \left\{(i,0,i) \,:\, 1 \leq i \leq N\right\} \cup \left\{(0,i,i) \,:\, 1 \leq i \leq N\right\}.
	\] 
	Thus, $S \subset \{0,1,\ldots,N\}^3$ and $|S|=3N$.
    Suppose that $s=(s_1,s_2,s_3)=(i,i,0)$ for some $i\in \{1,\ldots,N\}$ and consider all 6 possible permutations $\pi$ of $\{1,2,3\}$.
    If $\pi(1)=1$, i.e., coordinate 1 is revealed first, then $X_1(s,\pi)=N+1$, as $s_1$ can take any value from 0 to $N$.
    If $\pi(3)=1$, i.e., coordinate 1 is revealed last, then $X_1(s,\pi)=1$, as $s_2=i$ and $s_3=0$ determine that $s_1=i$.
    Otherwise, $\pi(2)=1$, so $\pi=(2,1,3)$ or $\pi=(3,1,2)$.    
    If $\pi=(2,1,3)$, from $s_2=i$ we learn that $s_1$ is either 0 or $i$, so there are only two options left for $s_1$.
    Finally, if $\pi=(3,1,2)$, from $s_3=0$ we only learn that $s_1\ne 0$, so there are still $N$ options for $s_1$.
    We can repeat a similar analysis for $s=(i,0,i)$ and $s=(0,i,i)$ to calculate all values of $X_1(s,\pi)$; see Table~\ref{table:example}. Since $S$ is symmetric under a permutation of the coordinates, the possible values of $X_2(s,\pi)$ and $X_3(s,\pi)$ would be given by a table that can be obtained from Table~\ref{table:example} by a permutation of rows and columns.    
    
\begin{table}[h]
\begin{center}
\begin{tabular}{|c|c|c|c|c|c|c|}
\hline
$X_1(s,\pi)$ & 123 & 132 & 213 & 231 & 312 & 321 \\ \hline
$(i,i,0)$    & $N+1$ & $N+1$ & 2   & 1   & $N$ & 1   \\ \hline
$(i,0,i)$    & $N+1$ & $N+1$ & $N$ & 1   & 2   & 1   \\ \hline
$(0,i,i)$    & $N+1$ & $N+1$ & 2   & 1   & 2   & 1   \\ \hline
\end{tabular}
\caption{Summary of values of $X_1(s,\pi)$. Rows represent the elements $s\in S$, while columns represent the permutations $\pi$ of $\{1,2,3\}$.}\label{table:example}
\end{center}
\end{table}    
\vspace{-5mm}

Observe that the trivial upper bound gives $|S|\le (N+1)^3$, while the slightly better bound in (\ref{easy-upper}) gives $|S|\le (N+1)\cdot N\cdot 1 = O(N^2)$ for any permutation $\pi$.	
	
Lemma~\ref{lem:main} (to be stated below) further improves this bound on $|S|$.
The first part of Lemma~\ref{lem:main} gives $\log |S|\le \log(N+1)+\frac13\log N+\frac23\log 2+\log 1$ (for any choice of $\pi$), which implies $|S|\le 2^{\frac23} (N+1)N^{\frac13} = O(N^{\frac 43})$.
The second part of Lemma~\ref{lem:main}, when we average over only $\pi$, chosen uniformly at random, gives the weaker
$\log |S|\le 3 \cdot \frac16 (2\log(N+1)+\log N +\log 2+2\log 1)$, which implies $|S|\le (2(N+1)^2N)^{\frac 12} = O(N^{\frac 32})$.	
The loss compared to the previous bound is due to taking the maximum over all $s\in S$ instead of the expectation, but we have stated this result in this form because in some applications (like in the proof of Theorem \ref{thm:main}) we have no control over $s$.
Finally, Corollary~\ref{cor:main} (to be stated below), which is easier to apply than Lemma~\ref{lem:main}, gives the much weaker bound $|S|\le (\frac{3N+6}6)^3$.
\end{example}	

Now we state our main lemma.
	
\begin{lem}\label{lem:main}
Let $S \subset A_1 \times \cdots \times A_n$ and $\pi$ be any fixed permutation of $\{1,\ldots,n\}$. 
Choose $s \in S$ uniformly at random.
Then
    \[
    \log |S|\le E_s\left[ \sum_{i=1}^n \log X_i(s,\pi)\right].
    \]
    By averaging, if $\pi$ is chosen at random from the permutations of $\{1,\ldots,n\}$ according to an arbitrary probability distribution,
	\[
	\log |S|\le E_{(s,\pi)}\left[ \sum_{i=1}^n \log X_i(s,\pi)\right]\le
	\sum_{i=1}^n \max_{s\in S} E_\pi\left[ \log X_i(s,\pi)\right].
	\]
\end{lem}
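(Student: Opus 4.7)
The plan is to view $\log|S|$ as the Shannon entropy $H(s)$ of a uniformly random $s\in S$ and apply the chain rule for entropy along the order prescribed by $\pi$. With $\pi$ fixed, the chain rule yields
$$\log|S| \;=\; H(s) \;=\; H(s_{\pi(1)},\ldots,s_{\pi(n)}) \;=\; \sum_{k=1}^n H\bigl(s_{\pi(k)} \,\big|\, s_{\pi(1)},\ldots,s_{\pi(k-1)}\bigr).$$
Next I would invoke the standard support-size bound for conditional entropy: if $X,Y$ are jointly distributed and $X$ takes at most $k(y)$ values when $Y=y$, then $H(X\mid Y)\le E_Y[\log k(Y)]$. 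Applied with $X=s_{\pi(k)}$ and $Y=(s_{\pi(1)},\ldots,s_{\pi(k-1)})$, the number of values compatible with the revealed prefix is by definition $X_{\pi(k)}(s,\pi)$, a quantity that depends on $s$ only through the coordinates already revealed. Hence $H(s_{\pi(k)}\mid s_{\pi(1)},\ldots,s_{\pi(k-1)}) \le E_s[\log X_{\pi(k)}(s,\pi)]$, and summing over $k$ (then reindexing $i=\pi(k)$) gives the first inequality.

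For the second inequality, since the first bound holds for every fixed $\pi$, I can take the expectation over $\pi$ drawn independently of $s$ according to the given distribution:
$$\log|S| \;\le\; E_\pi\!\left[E_s\!\left[\sum_{i=1}^n \log X_i(s,\pi)\right]\right] \;=\; E_{(s,\pi)}\!\left[\sum_{i=1}^n \log X_i(s,\pi)\right].$$
By linearity of expectation and the independence of $s$ and $\pi$, the right-hand side equals $\sum_{i=1}^n E_s\bigl[E_\pi[\log X_i(s,\pi)]\bigr]$, and replacing each outer expectation over $s$ by a maximum over $s\in S$ yields the stated upper bound.

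There is no real obstacle here: the result is essentially the entropy chain rule combined with a simple averaging trick. The only subtlety worth double-checking is that $X_i(s,\pi)$ genuinely depends on $s$ only through the components $s_j$ with $\pi^{-1}(j)<\pi^{-1}(i)$, which is immediate from the definition and is precisely what legitimizes the support-size bound on conditional entropy at each step. The final replacement of $E_s$ by $\max_{s\in S}$ in the second form is where one pays for the freedom of having no control over $s$, as the authors already anticipate in their discussion of Example~\ref{example1}.
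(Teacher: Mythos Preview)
Your proof is correct and follows essentially the same route as the paper's: entropy of a uniform element, chain rule along $\pi$, and the support-size bound for each conditional term, followed by averaging over $\pi$ and replacing $E_s$ by $\max_s$. The only cosmetic difference is that the paper inserts the intermediate step $H(s_i\mid \text{prefix})\le H(s_i\mid X_i(s,\pi))$ via data processing before bounding by $\log k$, whereas you invoke the support-size bound directly; the content is identical.
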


Applying Jensen's inequality gives the following weaker but simpler form of Lemma~\ref{lem:main}.

\begin{cor}\label{cor:main}
Let $S \subset A_1 \times \cdots \times A_n$. Then for any probability distribution of the permutations
    \[
    |S|\le 
	\prod_{i=1}^n \max_{s\in S} E_\pi[X_i(s,\pi)].
	\]
\end{cor}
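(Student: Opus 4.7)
The plan is to deduce the corollary directly from the second form of Lemma~\ref{lem:main} by applying Jensen's inequality to each outer summand, then exponentiating. Nothing about the combinatorial structure of $S$ needs to be reopened; this is a clean deduction from the bound $\log|S|\le \sum_{i=1}^n \max_{s\in S} E_\pi[\log X_i(s,\pi)]$ already provided.

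First I would fix $i$ and a particular $s \in S$, view $X_i(s,\pi)$ as a positive random variable in $\pi$, and invoke Jensen's inequality for the concave function $\log$ to obtain
\[
E_\pi[\log X_i(s,\pi)] \le \log E_\pi[X_i(s,\pi)].
\]
Because $\log$ is monotone increasing, taking the maximum over $s \in S$ on both sides preserves the inequality and commutes with $\log$ on the right:
\[
\max_{s\in S} E_\pi[\log X_i(s,\pi)] \le \max_{s\in S} \log E_\pi[X_i(s,\pi)] = \log\!\left(\max_{s\in S} E_\pi[X_i(s,\pi)]\right).
\]

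Summing this over $i=1,\ldots,n$ and combining with Lemma~\ref{lem:main} yields
\[
\log|S| \le \sum_{i=1}^n \log\!\left(\max_{s\in S} E_\pi[X_i(s,\pi)]\right) = \log\!\prod_{i=1}^n \max_{s\in S} E_\pi[X_i(s,\pi)].
\]
Exponentiating gives the stated bound. There is no real obstacle here; the only care needed is to note that $X_i(s,\pi)\ge 1$ whenever $s\in S$ (the actual $i$th component of $s$ is always a legal continuation), so the logarithms are nonnegative and well-defined, and that the monotonicity of $\log$ lets the outer maximum pass through it cleanly. The corollary is therefore strictly weaker than Lemma~\ref{lem:main}, but---as the authors emphasize---has the practical advantage that one only needs to estimate the expected count $E_\pi[X_i(s,\pi)]$ rather than its logarithm.
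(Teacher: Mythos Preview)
Your proof is correct and follows exactly the approach indicated in the paper: apply Jensen's inequality to pass from $E_\pi[\log X_i(s,\pi)]$ to $\log E_\pi[X_i(s,\pi)]$ in the second bound of Lemma~\ref{lem:main}, then exponentiate. The extra remark that $X_i(s,\pi)\ge 1$ keeps the logarithms well-defined is a welcome bit of care.
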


Although we could not find Lemma \ref{lem:main} explicitly stated anywhere, very similar bounds are frequently used in entropy proofs.
Indeed, the first line of Lemma \ref{lem:main} follows from Shannon's famous noiseless coding theorem \cite{Shannon}.
In that language Lemma \ref{lem:main} says that if the letters of the alphabet $S$ are evenly distributed in a random text, then any encoding uses at least $\log |S|$ bits per character on average.
Lemma \ref{lem:main} can be also easily proved using the basic properties of entropy; we sketch such a proof in Appendix \ref{app:main}.

Taking averages over all permutations is also a widely used tool in proofs. Arguably, the core idea of this goes back to Katona's circle method \cite{Katona}; see also the permutation method described in \cite[Chapter 1]{GP}.
Therefore, most of the arguments presented in this paper could be repeated using entropy and averaging, but we feel that Lemma \ref{lem:main} can be a useful tool for proving upper bounds because it can be applied without introducing entropy.

Let us mention one form of Lemma \ref{lem:main}, not explicitly stated in the book by Alon-Spencer \cite[Chapter 2, The Probabilistic Lens]{AS}.
They defined for a non-negative random variable $X$ taking finitely many possible values, all with non-zero probability, its \emph{geometric mean} as $G[X]=\prod_x x^{Pr[X=x]}$. As noted there, $G[X]=e^{E[\log X]}\le E[X]$. 
In this language, Lemma \ref{lem:main} states
$|S|\le \prod_{i=1}^n G[X_i(s,\pi)]$ where $s$ is a uniform random variable and $\pi$ is an arbitrary permutation. 


Lemma~\ref{lem:main} shows that in order to
 get an upper bound on $|S|$, it is enough to find components $A_i$ so that $E_\pi[\log X_i(s,\pi)]$ is small for every $s \in S$.
As a simple application, we prove Bregman's theorem \cite{BR}. The argument follows Alon-Spencer \cite{AS} and Radhakrishnan \cite{R}, the only difference is that entropy is replaced by a simple application of Lemma \ref{lem:main}. 

\begin{thm}[Bregman \cite{BR}]
	The number of perfect matchings in a bipartite graph $G=(U,V;E)$ is at most $\prod_i (d_i!)^{1/d_i}$ where $d_i$ denotes the degree of vertex $v_i \in V$.
\end{thm}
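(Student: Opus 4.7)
The plan is to encode each perfect matching $M$ of $G$ as an $n$-tuple and apply the second part of Lemma \ref{lem:main} with $\pi$ chosen uniformly at random from all permutations of $\{1,\ldots,n\}$. Specifically, for each matching $M$, let its $i$-th component be $M(v_i)$, the vertex in $U$ matched to $v_i$. Then the set $S$ of perfect matchings of $G$ sits inside $A_1\times\cdots\times A_n$, where $A_i$ is the neighborhood of $v_i$ in $U$, so $|A_i|=d_i$.

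Next I would bound $X_i(M,\pi)$ from above. Suppose $\pi$ is fixed and the components are revealed in the order prescribed by $\pi$. Whenever $v_j$ is revealed before $v_i$, every matching in $S$ consistent with the revealed data must also match $v_j$ to $M(v_j)$. Consequently, no neighbor $u$ of $v_i$ whose $M$-partner $M^{-1}(u)$ has already been revealed can serve as the $i$-th component; hence $X_i(M,\pi)$ is at most the number of neighbors $u\in A_i$ of $v_i$ for which $\pi^{-1}(M^{-1}(u))\ge \pi^{-1}(i)$.

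The key observation is that the $d_i$ neighbors of $v_i$ are matched by $M$ to $d_i$ distinct vertices of $V$, one of which is $v_i$ itself. Under a uniformly random permutation $\pi$, the rank of $v_i$ among these $d_i$ vertices is uniform on $\{1,2,\ldots,d_i\}$; if this rank equals $k$, then exactly $d_i-k+1$ of the neighbors of $v_i$ remain available. Thus $X_i(M,\pi)$ is stochastically dominated by the uniform distribution on $\{1,\ldots,d_i\}$, giving
\[
E_\pi[\log X_i(M,\pi)]\le \frac{1}{d_i}\sum_{k=1}^{d_i}\log k=\frac{\log(d_i!)}{d_i}.
\]
Since this holds for every $M\in S$, plugging into the second part of Lemma \ref{lem:main} and exponentiating yields $|S|\le \prod_i (d_i!)^{1/d_i}$, as required.

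The main obstacle, and really the only non-routine point, is the rank observation in the third paragraph: one must notice that the $d_i$ partners (under $M$) of $v_i$'s neighbors form a size-$d_i$ subset of $V$ containing $v_i$ itself, so that the relevant count reduces to a uniformly random rank in $\{1,\ldots,d_i\}$. Without this remark, one only recovers the trivial bound $X_i(M,\pi)\le d_i$ and hence the weaker product $\prod_i d_i$ instead of the Bregman exponent $1/d_i$.
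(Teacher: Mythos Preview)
Your proposal is correct and follows essentially the same approach as the paper: encode matchings by their values on $V$, bound $X_i(M,\pi)$ by the number $N_i(M,\pi)$ of neighbors of $v_i$ whose $M$-partner in $V$ has not yet been revealed, and use that under a uniform $\pi$ this count is uniform on $\{1,\ldots,d_i\}$. The paper phrases the rank observation slightly differently (``since the edges of $s$ were ordered randomly, $Pr[N_i(s,\pi)=k]=1/d_i$''), but the content is identical.
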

\begin{proof}
Let $S$ be the set of all perfect matchings in $G$ and
    let $A_i$ be the set of edges incident to $v_i \in V$.
    Then $S \subset \bigtimes_i A_i$.
	Fix a perfect matching $s \in S$ and order the vertices $v_i \in V$ according to a random permutation $\pi$.
	We first reveal the edge of $s$ incident to $v_{\pi(1)}$, then the edge of $s$ incident to $v_{\pi(2)}$ and so on.
	Recall that $X_i(s,\pi)$ is the number of options remaining for the edge of $s$ incident to $v_i$
	right before it is revealed.
	Let $N_i(s,\pi)$ be the number the neighbors of $v_i$ (necessarily in $U$) that have not yet had their edge from $s$ revealed. Then we have $X_i(s,\pi)\le N_i(s,\pi)$, where the inequality can be strict, as some of these edges may not be extendable to a perfect matching.
	
 	Since the edges of $s$ were ordered randomly, $Pr[N_i(s,\pi)=k]= 1/{d_i}$ for every $1\le k\le d_i$.
	Therefore, by Lemma \ref{lem:main} we have that the number of perfect matchings $|S|$ satisfies
	\begin{align*}
	\log |S|
	& \le \sum_{i} \max_{s \in S} E_\pi[ \log X_i(s,\pi)]\le \sum_{i} \max_{s \in S} E_\pi[ \log N_i(s,\pi)] = \sum_{i} \sum_{k=1}^{d_i} \frac 1{d_i}\log k=\sum_{i} \log\left((d_i!)^{1/d_i}\right).\phantom\qedhere 
	\end{align*}
\end{proof}

\section{General method and a simple bound for $SM(n)$}\label{sec:simple}

In this section we give our first bounds on $SM(n)$ by counting downsets in the so-called tangled grid poset.

\subsection{From stable matchings to downsets in the tangled grid poset}\label{sec:down-to-tg}

In this subsection we detail a connection between stable matchings and downsets of a tangled grid.
We begin by discussing how stable matchings can be mapped to the downsets in the rotation poset, a result established by Irving and Leather \cite{IL}. (It is also possible to show a correspondence in the other direction, see \cite{Blair,GILS}.)
Following \cite{KGW}, we define a rotation in a stable matching.



\begin{defi}[Rotation]
    Let $k \geq 2$. A \emph{rotation} $\rho$ is an ordered list of edges 
    \[
    \rho = (u_0v_0, u_1v_1,\dots, u_{k-1}v_{k-1})
    \]
    in a stable matching $\mu$
     with the property that for $i=0,1,2,\dots, k-1$ applicant $v_{i+1}$ (where the indices are modulo $k$) is the highest ranked applicant for $u_i$'s list of preferences satisfying:
    \begin{enumerate}
        \item job $u_i$ prefers $v_{i}$ to $v_{i+1}$, and
        \item applicant $v_{i+1}$ prefers $u_{i}$ to $u_{i+1}$.
    \end{enumerate}
    In this case we say that $\rho$ is \emph{exposed} in the stable matching $\mu$.
\end{defi}



We say that a rotation $\rho = (u_0v_0, u_1v_1,\dots, u_{k-1}v_{k-1})$ is \emph{eliminated} if we replace the edges of the stable matching $\mu$ appearing in $\rho$ with
$u_{0}v_{1},u_{1}v_{2},\ldots, u_{k-1}v_{0}$. It is easy to see that the perfect matching resulting from the elimination of a rotation $\rho$ from a stable matching is itself stable (see Lemma~4.5 in \cite{IL}).
It was shown in \cite{GI} that any stable matching can be obtained from what we will call a \emph{job-optimal} stable matching $\mu_0$ through a sequence of eliminations of exposed rotations. This leads to a natural poset on the family $\mathcal{R}$ of rotations exposed in any stable matching of a stable matching instance. Indeed, for rotations $\rho,\rho' \in \mathcal{R}$ we say that $\rho \prec \rho'$ if and only if for every stable matching $\mu$ in which $\rho'$ is exposed we must eliminate $\rho$ to arrive at $\mu$ from $\mu_0$. We call the poset $\mathcal{R}$ {\it rotation poset}.


Recall that a subset of a poset is a {\it downset} (or {\it downward closed set}) if for every element of the downset, all smaller elements of the poset are in the downset.

\begin{thm}[Irving \& Leather \cite{IL}, Theorem~4.1]\label{bijection-thm}
Given a stable matching instance, there is a bijection between the family of stable matchings and the downsets of the rotation poset.
\end{thm}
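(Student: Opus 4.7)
The plan is to construct the bijection explicitly, sending a stable matching $\mu$ to the set of rotations eliminated in any sequence that transforms $\mu_0$ into $\mu$. By the Gusfield--Irving result cited above, at least one sequence $\rho_1,\ldots,\rho_k$ of exposed rotation eliminations carries $\mu_0$ to $\mu$, so I would define
\[
\Phi(\mu) := \{\rho_1,\ldots,\rho_k\}.
\]
The four properties to verify are that $\Phi(\mu)$ is well-defined, that $\Phi(\mu)$ is a downset of $\mathcal{R}$, that $\Phi$ is injective, and that every downset arises as some $\Phi(\mu)$.

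For well-definedness, the key tool is a monotonicity principle: when a rotation $\rho = (u_0v_0,\ldots,u_{k-1}v_{k-1})$ is eliminated, each job $u_i$ descends strictly from $v_i$ to $v_{i+1}$ on its preference list, and any further elimination can only push $u_i$ further down (standard consequence of repeatedly eliminating exposed rotations). Hence the current partner of each job in any intermediate matching already determines which rotations involving that job have been eliminated, and summing over jobs recovers the whole multiset, which is in fact a set (no rotation is used twice). The downset property is then immediate from the definition of $\prec$: if $\rho \in \Phi(\mu)$ and $\rho' \prec \rho$, then at the step when $\rho$ first becomes exposed in the chosen sequence, $\rho'$ has already been eliminated, so $\rho' \in \Phi(\mu)$. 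Injectivity follows from the same monotonicity: two stable matchings with the same eliminated-rotation set induce the same total displacement of every job from $\mu_0$, and therefore they coincide.

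The main obstacle is surjectivity. Given a downset $D \subseteq \mathcal{R}$, I would pick any linear extension of $D$ under $\prec$ and eliminate its rotations in that order starting from $\mu_0$, aiming to produce a stable matching $\mu_D$ with $\Phi(\mu_D) = D$. The crucial sub-claim, to be proved by induction along the extension, is that the next rotation is always exposed at the current intermediate stable matching. I expect to argue this by contrapositive: if all $\prec$-predecessors of $\rho$ have already been eliminated at the current matching $\mu'$ but $\rho$ is still not exposed, then pick any stable matching $\mu$ reachable from $\mu'$ in which $\rho$ is eventually exposed (e.g.\ one lying between $\mu'$ and the applicant-optimal matching); the definition of $\prec$ would force some predecessor of $\rho$ to appear on every elimination path from $\mu_0$ to $\mu$, contradicting the fact that all such predecessors are already eliminated at $\mu'$. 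Once this sub-claim is established, the inductive construction terminates with a stable matching whose image under $\Phi$ is exactly $D$, completing the bijection.
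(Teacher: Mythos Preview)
The paper does not prove this theorem at all: Theorem~\ref{bijection-thm} is stated with the attribution ``Irving \& Leather~\cite{IL}, Theorem~4.1'' and then immediately used as a black box to reduce the counting of stable matchings to the counting of downsets in the rotation poset. There is therefore no proof in the paper against which to compare your proposal.

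That said, your outline is the standard Irving--Leather argument in broad strokes: map $\mu$ to the set of rotations eliminated on any path from $\mu_0$ to $\mu$, check well-definedness via the monotone descent of each job through its preference list, read off the downset property from the definition of $\prec$, and handle surjectivity by eliminating the rotations of a given downset in any linear-extension order. The one place where your sketch is genuinely thin is the surjectivity sub-claim. Your contrapositive argument assumes you can find a stable matching $\mu$ reachable from $\mu'$ in which $\rho$ is eventually exposed, and then invokes the definition of $\prec$ to produce a not-yet-eliminated predecessor. But the definition of $\prec$ in the paper refers to elimination sequences starting from $\mu_0$, not from $\mu'$, so you would need to argue that an elimination sequence from $\mu_0$ to $\mu$ can be chosen to pass through $\mu'$ (this is where the lattice/confluence structure of rotation eliminations is really used in~\cite{IL,GI}). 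Without that confluence step, the contrapositive does not close: a predecessor of $\rho$ on some $\mu_0$-to-$\mu$ path need not be a predecessor on paths that go through $\mu'$. If you want a self-contained write-up, you should make that confluence property explicit rather than leave it implicit in the phrase ``pick any stable matching reachable from $\mu'$.''
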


As in \cite{KGW}, this correspondence is central to our argument---instead of investigating stable matchings directly, we may estimate the number of downsets in the rotation poset.
We will need the following immediate consequence of an observation explicitly stated in \cite{KGW}, and implicit in \cite{GI}.
We say that a rotation $\rho$ {\it involves} vertex $v$ if $v$ appears among the vertices in the ordered list of edges in $\rho$.

Recall that a {\it chain} in a poset is a sequence of comparable elements $a_1 \prec a_2 \prec \cdots \prec a_k$. We will use the term {\it top} and {\it bottom} of a chain to refer to elements $a_1$ and $a_k$, respectively.

\begin{prop}[Karlin, Gharan \& Weber \cite{KGW}, Claim~4.6]\label{kgw-claim}
    The rotations involving a fixed vertex $v$ form a chain in the rotation poset.
\end{prop}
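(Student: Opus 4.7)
The plan is to reduce this to the classical fact that the stable partners of any fixed vertex are linearly ordered by that vertex's preference list, and then to argue that distinct rotations involving $v$ correspond to distinct ``transitions'' in this linear order, with the order of transitions forced.

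First, I would recall (or cite, from Knuth's lattice-theoretic analysis of stable matchings, or equivalently McVitie--Wilson) that for any vertex $v$, the multiset $\{\mu(v) : \mu \textrm{ stable}\}$ consists of distinct partners and is totally ordered by $v$'s preferences; call them $p_1 \succ_v p_2 \succ_v \cdots \succ_v p_m$, using the convention that $p_1$ is $v$'s best stable partner (so $p_1 = \mu_0(v)$ when $v$ is a job, and $p_1$ is $v$'s worst stable partner when $v$ is an applicant, up to the obvious symmetry). Second, I would note, directly from the definition of a rotation, that if $\rho$ involves $v$ and is exposed in $\mu$, then eliminating $\rho$ changes $\mu(v)$: in fact the partner strictly worsens for a job and strictly improves for an applicant, so each such elimination moves $v$ one step along the sequence $p_1, p_2, \ldots, p_m$.

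Next, I would argue that the new partner is the \emph{immediate} successor in that sequence, never skipping any $p_j$. For suppose eliminating some $\rho \in R_v$ in a matching $\mu$ with $\mu(v)=p_i$ produced a matching $\mu'$ with $\mu'(v)=p_j$ and $j>i+1$. Then $p_{i+1}$ is a stable partner of $v$ that does not appear in the linearly ordered chain of partners obtained by the sequence of eliminations, contradicting the fact that this chain (moving from $\mu_0$ downward) must realize every stable partner of $v$---which follows by applying the lattice structure to the join/meet of $\mu'$ and any stable matching achieving $\mu(v)=p_{i+1}$. Hence the rotations involving $v$ are in bijection with $\{1,\ldots,m-1\}$: label them $\rho_1,\ldots,\rho_{m-1}$, where $\rho_i$ transitions $v$'s partner from $p_i$ to $p_{i+1}$.

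Finally, I would show $\rho_1 \prec \rho_2 \prec \cdots \prec \rho_{m-1}$ in the rotation poset. For $\rho_i$ to be exposed in some stable matching $\mu$, we need $\mu(v)=p_i$; but $\mu$ is obtained from $\mu_0$ by eliminating a sequence of exposed rotations forming a downset, and this sequence must have shifted $v$'s partner from $p_1$ to $p_i$. By the previous paragraph, the only rotations that can do so are $\rho_1,\ldots,\rho_{i-1}$, each of which must be eliminated exactly once. Hence every $\rho_j$ with $j<i$ lies in the downset corresponding to $\mu$, which by the definition of $\prec$ in the rotation poset (Irving--Leather, using Theorem~\ref{bijection-thm}) gives $\rho_j \prec \rho_i$. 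Thus the rotations involving $v$ form a chain.

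The main obstacle is the ``no skipping'' step in the middle paragraph: clean justification requires invoking the lattice structure of the set of stable matchings (or an explicit join argument showing that between any two consecutive stable partners of $v$ there is a stable matching whose downset contains exactly the rotations needed to arrive at the worse partner). Everything else is direct from the definitions and from the Irving--Leather correspondence already recorded in Theorem~\ref{bijection-thm}.
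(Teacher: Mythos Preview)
The paper does not prove this proposition; it is quoted as Claim~4.6 of \cite{KGW} and used as a black box, so there is no argument in the paper to compare yours against.

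Your sketch follows the standard line and is essentially correct. Two points you leave implicit are easy to fill in. First, the bijection between rotations involving $v$ and the transitions $p_i\to p_{i+1}$ also needs injectivity (no two distinct rotations effect the same transition); this is immediate from Proposition~\ref{one-pair}, since any rotation moving $v$ away from partner $p_i$ must contain the edge $vp_i$. Second, the ``no skipping'' step you flag does go through cleanly via the lattice structure: eliminating a single rotation is a cover relation $\mu'\lessdot\mu$ in the stable-matching lattice; if $\mu(v)=p_i$, $\mu'(v)=p_j$ with $j>i+1$, and $\mu''$ is any stable matching with $\mu''(v)=p_{i+1}$, then $\mu'\vee(\mu\wedge\mu'')$ has $v$-partner $p_{i+1}$ and satisfies $\mu'\le \mu'\vee(\mu\wedge\mu'')\le\mu$, hence lies strictly between $\mu'$ and $\mu$, contradicting the cover relation. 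So the obstacle you identify is not serious.
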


So each vertex of a stable matching instance defines a chain in the rotation poset. 
 For a job $u$, let $M(u)$ be the chain of rotations involving $u$. The family of chains $\{M(u) \, : \, u \textrm { is a job} \}$ are the {\it m-chains} in the rotation poset. Similarly, for an applicant $v$, let $W(v)$ be the chain of rotations involving $v$ and the family of chains $\{W(v) \, : \, v \textrm { is an applicant} \}$ are the {\it w-chains} in the rotation poset.

\begin{prop}[Irving \& Leather \cite{IL}, Lemma~4.7]\label{one-pair}
A edge $uv$ where $u$ is a job and $v$ is an applicant appears in at most one rotation.
\end{prop}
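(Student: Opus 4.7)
My plan is to argue by contradiction: suppose edge $uv$ appears in two distinct rotations $\rho$ and $\rho'$. By Proposition~\ref{kgw-claim} applied to job $u$, the rotations involving $u$ form a chain in the rotation poset, so $\rho$ and $\rho'$ are comparable; without loss of generality, $\rho \prec \rho'$. I will derive a contradiction by showing that once $\rho$ has been eliminated, $u$ can never again be matched to $v$, so $\rho'$ cannot contain $uv$.

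Write $\rho = (u_0 v_0, u_1 v_1, \ldots, u_{k-1} v_{k-1})$ with $u = u_i$ and $v = v_i$ for the appropriate index $i$. By the definition of the rotation poset, any stable matching $\mu'$ in which $\rho'$ is exposed must be obtained from the job-optimal matching $\mu_0$ by a sequence of rotation eliminations that includes $\rho$ (this is how $\prec$ was defined, following \cite{GI,IL}). After eliminating $\rho$, job $u$ is matched with $v_{i+1}$, and by condition (1) of the rotation definition, $u$ strictly prefers $v = v_i$ to $v_{i+1}$.

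The key monotonicity step is to observe that every subsequent rotation elimination either does not involve $u$ (leaving $u$'s partner unchanged) or else involves $u$, and in the latter case condition (1) again forces $u$'s new partner to be strictly worse (from $u$'s perspective) than the previous one. Iterating, $u$'s partner in $\mu'$ is at best as good as $v_{i+1}$ in $u$'s ranking, which is strictly worse than $v$. Hence $u$ is not matched to $v$ in $\mu'$, contradicting the fact that $\rho'$ is exposed in $\mu'$ and contains the edge $uv$.

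The main (and only mildly subtle) obstacle is the monotonicity claim, but this is immediate from condition~(1) of the rotation definition applied at each elimination step in the sequence from $\mu_0$ to $\mu'$. Everything else is a direct unwinding of definitions and a single appeal to Proposition~\ref{kgw-claim}.
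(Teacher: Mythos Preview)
The paper does not supply its own proof of this proposition; it is simply quoted from Irving and Leather \cite{IL} as a known fact. So there is nothing in the paper to compare against.

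Your argument is correct. The only point worth flagging is the reliance on Proposition~\ref{kgw-claim} to get comparability of $\rho$ and $\rho'$: in the paper's presentation that proposition is stated (and cited from \cite{KGW}) before Proposition~\ref{one-pair}, so there is no circularity within this paper. The monotonicity step---that each rotation elimination strictly worsens a job's partner in its own preference order, by condition~(1) of the rotation definition---is exactly the standard engine behind both results in the original sources, and once you have it, the contradiction you describe (that $u$'s partner in any $\mu'$ exposing $\rho'$ is strictly worse than $v$, so $uv\notin\mu'$) goes through cleanly.
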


Recall that a \emph{chain decomposition} of a poset is a partition of its elements into pairwise disjoint chains.

\begin{defi}[Tangled grid]
    A poset $P$ is an \emph{$n\times n$ tangled grid} if $P$ has two  $n$-member chain decompositions called {\it m-chains} and {\it w-chains}, such that each m-chain and each w-chain intersect each other in exactly one element.
\end{defi}

\begin{prop}\label{subposet}
    Every rotation poset of $n$ jobs and $n$ applicants is a subposet of an $n\times n$ tangled grid.
\end{prop}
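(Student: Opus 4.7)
The plan is to construct an $n \times n$ tangled grid $T$ containing $\mathcal{R}$ as an induced subposet. Take $T$'s underlying set to be the $n^2$ positions $p_{u,v}$, one per pair of a job $u$ and an applicant $v$, with m-chains the rows $\{p_{u,v} : v\}$ and w-chains the columns $\{p_{u,v} : u\}$; this gives an $n$-member chain decomposition in which each m-chain and w-chain intersect in a single element, so the decomposition axioms of a tangled grid are satisfied automatically.

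For the labeling and chain orders: by Proposition \ref{one-pair} every edge $uv$ belongs to at most one rotation, so each $p_{u,v}$ is either labeled with the unique rotation $\rho$ containing the edge $uv$ or marked as a dummy; distinct rotations thus label disjoint sets of positions. By Proposition \ref{kgw-claim}, $M(u)$ is a chain in $\mathcal{R}$, and the rotations in $M(u)$ correspond to distinct applicants (the other endpoints of $u$'s edge in each rotation). I would order the row at $u$ so that the labeled positions appear in $M(u)$-order, with the dummy positions placed at the top; the columns are ordered symmetrically using the chain $W(v)$. The partial order on $T$ is the transitive closure of these chain orders.

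To embed $\mathcal{R}$, I would pick one labeled position per rotation as its representative $\pi(\rho)$; this is an injection by Proposition \ref{one-pair}. One direction of the induced-subposet check is essentially free: since the chain orders in $T$ lift the chain orders of $M(u)$ and $W(v)$, any chain path in $T$ between representatives of $\rho$ and $\sigma$ corresponds to a sequence of chain-related rotations in $\mathcal{R}$, whose transitive closure yields $\rho \prec_{\mathcal{R}} \sigma$.

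The delicate direction is showing that every $\rho \prec_{\mathcal{R}} \sigma$ lifts to $\pi(\rho) \prec_T \pi(\sigma)$. The difficulty is the asymmetry: $\rho$ belongs simultaneously to several $M(u_i)$ and $W(v_i)$ in $\mathcal{R}$, but its representative $\pi(\rho)$ sits on only one row and one column of $T$. To route the comparability, I would exploit the non-representative labeled positions of $\rho$ (and of $\sigma$) as intermediate waypoints, together with the dummies of the relevant rows and columns, and I would invoke the standard fact that every cover relation in $\mathcal{R}$ is witnessed by a shared $M(u)$ or $W(v)$. A suitable choice of representatives (e.g.\ determined greedily along a linear extension of $\mathcal{R}$) and of dummy placement inside each row and column should then produce, for every cover relation of $\mathcal{R}$, a chain path in $T$ linking the representatives while introducing no spurious comparabilities. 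This combinatorial routing, rather than the high-level construction, is the main obstacle of the proof.
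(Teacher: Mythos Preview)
Your construction is quite different from the paper's, and the paper's route is much simpler.  The paper never builds an external $n\times n$ array and then tries to embed $\mathcal{R}$ into it.  Instead it keeps $\mathcal{R}$ itself and \emph{extends} it: for each rotation $\rho=(u_0v_0,\dots,u_{k-1}v_{k-1})$ it drops $\rho$ from every m-chain except $M(u_0)$ and every w-chain except $W(v_0)$, so that the trimmed m-chains and w-chains become two honest $n$-member chain decompositions of $\mathcal{R}$; then, whenever some m-chain and w-chain fail to meet, it adjoins a brand-new element above everything already present and puts it into exactly those two chains.  The resulting poset is a tangled grid containing $\mathcal{R}$ as an induced subposet by construction---no representatives, no routing, no acyclicity check.

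Your plan, by contrast, has gaps beyond the one you flag.  First, you never verify that the transitive closure of your row and column orders is acyclic, i.e.\ that $T$ is a poset at all.  You say dummies go ``at the top'' of their row and of their column, but you do not specify how the several dummies inside a single row (or column) are ordered among themselves; an unlucky choice of these relative orders across rows and columns creates a directed cycle among dummies.  Second, the ``delicate direction'' is not just delicate, it is where your encoding loses exactly the information you need: in $\mathcal{R}$ a cover $\rho\prec\sigma$ is witnessed by some shared chain $M(u)$ or $W(v)$, but in $T$ the representative $\pi(\rho)$ lives on a \emph{single} row and column that need not be this witnessing one, and the other positions labeled $\rho$ are, in your order, incomparable to $\pi(\rho)$.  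So there is no obvious chain path in $T$ from $\pi(\rho)$ to $\pi(\sigma)$, and your suggestion of ``waypoints'' and a greedily chosen representative does not come with an argument.  The paper's approach sidesteps all of this precisely because the partial order on $\mathcal{R}$ is never altered---only new maxima are added---so both directions of the subposet check are automatic.
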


\begin{proof}
    We will modify the collection of m-chains and w-chains in a rotation poset $\mathcal{R}$ to arrive at m-chains and w-chains satisfying the definition of an $n \times n$ tangled grid.
    
    For each rotation $\rho = (u_0v_0, u_1v_1,\dots, u_{k-1}v_{k-1}) \in \mathcal{R}$, remove $\rho$ from each m-chain and w-chain except for the m-chain $M(u_0)$ and w-chain $W(v_0)$. By Proposition~\ref{one-pair}, $u_0v_0$ never appears in another rotation, so $M(u_0)$ and $W(v_0)$ intersect only in $\rho$.
    Now every element of $\mathcal{R}$ is contained in exactly one m-chain and in exactly one w-chain.
    Note that some of the chains might have become temporarily empty. At this point, the m-chains and w-chains both form chain decompositions of $\mathcal{R}$.
    
    Now it remains to show that the resulting poset is a subposet of a tangled grid. It is possible that we have an m-chain and a w-chain that do not intersect. In this case, add a new element to the poset that is included in these two chains, and is larger than every existing element of the poset. Continuing this procedure until every m-chain w-chain pair has an intersection results in a tangled grid.
\end{proof}

In an $n \times n$ tangled grid $P$, the m-chains are pairwise-disjoint, and the w-chains are also pairwise-disjoint.
Moreover, since each m-chain and w-chain intersect in a single element, each m-chain and w-chain is of length exactly $n$.
These together imply that $P$ has exactly $n^2$ elements.

Let $TG(n)$ be the maximum number of downsets in an $n \times n$ tangled grid and let $SM(n)$ be the maximum number of stable matchings in a stable matching instance with $n$ jobs and $n$ applicants. Proposition~\ref{subposet} and Theorem~\ref{bijection-thm} imply that $SM(n) \leq TG(n)$, so we may turn our attention to counting downsets in a tangled grid. This will be our approach in the next subsection.


\subsection{Simple bound for $TG(n)$}\label{sec:simpletg}

Let $P$ be an $n \times n$ tangled grid. For the sake of simpler notation, let us construct a new poset $P^*$ by adding two new elements $a_0$ and $b_0$ to $P$ that are less than all other elements in $P$.

Add $a_0$ to the bottom of each m-chain in $P$ and let $M_1,M_2,\dots, M_n$ be the resulting chains in $P^*$. Let $W_{n+1},W_{n+2},\dots, W_{2n}$ be the chains resulting from adding $b_0$ to the bottom of the w-chains in $P$. With a slight abuse of notation, we will continue to use the term m-chain and w-chain to describe these chains in $P^*$.
Observe that every pair of chains intersects in exactly one element. 

 Every downset in $P$ corresponds to a downset in $P^*$ that contains $a_0$ and $b_0$. Let $\mathcal{D}$ be the family of downsets in $P^*$ that contain both $a_0$ and $b_0$.
For a downset $D \in \mathcal{D}$, let $\mathrm{top}_D(M_i)$ and $\mathrm{top}_D(W_j)$ be the maximum element of $D$ on chain $M_i$ and $W_j$, respectively. Define 
\[
s_D = (\mathrm{top}_D(M_1),\dots, \mathrm{top}_D(M_n),\mathrm{top}_D(W_{n+1}),\dots, \mathrm{top}_D(W_{2n})).
\]
As $D$ is a downset, the intersection of $D$ and a chain $C$ is characterized by the maximum element $\mathrm{top}_D(C)$. Therefore, $s_D$ uniquely determines $D$. (In fact, the first $n$ terms or last $n$ terms of $s_D$ are enough to determine $D$, but our argument relies on the full encoding above). Thus, it is enough to find the cardinality of $S = \{s_D \, : \, D \in \mathcal{D}\}$. Clearly, $S \subset M_1 \times \cdots \times M_n \times W_{n+1} \times \cdots \times W_{2n}$.

Reveal the components $M_1,\dots,M_n,W_{n+1},\dots, W_{2n}$ of $S$ in order  according to a random permutation $\pi$ of $\{1,\ldots,2n\}$.
Recall that $X_i(s,\pi)$ is the number of options remaining for the $i$th component of $s$ right before it is revealed. Suppose that $i\leq n$, i.e., we have an m-chain (the case when $i>n$ is similar). So we are counting the number of options for $t=\mathrm{top}_D(M_i)$ in the m-chain $M=M_i$.
 
Denote the number of w-chains preceding $M$ according to $\pi$ by $L(i,\pi)=|\{j\in [n+1,2n] \,:\, \pi^{-1}(j)<\pi^{-1}(i)\}|$. 
 Define $X_i(s,\pi\mid L(i,\pi) =l)$ to be the value of $X_i(s,\pi)$ under the assumption that there are exactly $l$ w-chains revealed before $M$. 
 
 Now suppose that $L(i,\pi) =l$.
Each of the $l$ w-chains intersects $M$ in a unique element. For any of these $\ell$ w-chains $W$ we know the value of $\mathrm{top}_D(W)$.
The intersection $a$ of $M$ and $W$ is in the downset $D$ if and only if $\mathrm{top}_D(W)\ge a$. Moreover, $a$ is in $D$ if an only if $t$ is at least $a$. Therefore, revealing $W$ determines whether $a$ is in the downset $D$ or not. This means that after revealing $l$ w-chains, there are (at least) $l$ distinct elements on $M$ that are known to be in $D$ or not.

Computing $X_i(s,\pi\mid L(i,\pi) =l)$ directly is complicated, so we estimate it with a simpler random variable $N_l$.


\begin{defi}\label{def:Nl}
Take $n+1$ elements in a cyclic order, such that one of them is $t$.
Pick $l\le n$ distinct elements uniformly at random, indexed in circular order as $a_1, \ldots, a_\ell$  (allowing $a_j=t$ for some $j$).
Let $N_l$ be the length of the circular interval $[a_j,a_{j+1})$ (indexed cyclically) that contains $t$.
\end{defi}

A random variable $X$ is {\it at most $Y$ in the stochastic order} if for all $x$ we have $Pr[X\le x] \leq Pr[Y\le x]$. 

\begin{claim}\label{prop:domination}
For $i \leq n$ and $2\le \ell \le n$, $X_i(s,\pi\mid L(i,\pi) = l)$ is at most $N_l$ in the stochastic order.
\end{claim}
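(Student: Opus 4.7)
My plan is to prove the stochastic domination by constructing an explicit coupling between the random linear gap on $M_i$ and the cyclic random variable $N_\ell$.

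First I would unpack what $X_i(s,\pi \mid L(i,\pi)=\ell)$ measures. Fix $s=s_D$ and write $M_i = \{a_0 < c_1 < \cdots < c_n\}$ with $t = \mathrm{top}_D(M_i)$. Conditional on $L(i,\pi) = \ell$, symmetry of $\pi$ makes the set of $w$-chains preceding $M_i$ a uniformly random $\ell$-subset of the $n$ $w$-chains. Since distinct $w$-chains meet $M_i$ in distinct elements of $\{c_1,\ldots,c_n\}$, this yields a uniformly random $\ell$-subset $T \subseteq \{c_1,\ldots,c_n\}$. For each $c_j \in T$, the revealed value $\mathrm{top}_D(W)$ determines whether $c_j \le t$, so $t$ must lie in the linear gap $[\alpha,\beta)$, where $\alpha$ is the largest element of $T \cup \{a_0\}$ that is $\le t$ and $\beta$ is the smallest element of $T$ that is $> t$ (or a sentinel past $c_n$ if no such element exists). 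Hence $X_i$ is bounded by the size of this linear gap.

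Next I would build the coupling. Place the $n+1$ elements $a_0, c_1, \ldots, c_n$ in cyclic order (matching Definition \ref{def:Nl} with $t$ as the distinguished element), and from $T$ produce a cyclic pick set $T' \subseteq \{a_0, c_1, \ldots, c_n\}$ of size $\ell$ as follows: with probability $(n+1-\ell)/(n+1)$, set $T'=T$; otherwise choose $c_m \in T$ uniformly and set $T' = (T \setminus \{c_m\}) \cup \{a_0\}$. A short calculation with binomial coefficients (using $\binom{n+1}{\ell}^{-1} = \binom{n}{\ell}^{-1}\cdot (n+1-\ell)/(n+1)$ and the matching identity for subsets containing $a_0$) shows that $T'$ is uniformly distributed on $\ell$-subsets of $\{a_0,c_1,\ldots,c_n\}$, so under the coupling the length of the cyclic arc of $T'$ containing $t$ has the distribution of $N_\ell$.

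Finally I would verify, in every realization, that the linear gap length under $T$ is at most the cyclic arc length under $T'$ containing $t$. If $T' = T$, then $a_0$ is a pick in neither setup; the two sizes agree when $t$ has picks of $T$ both strictly below and strictly above it, while otherwise the linear gap is pinned by $a_0$ or by the top end and the cyclic arc wraps around through $a_0$, strictly extending it. If $T' = (T \setminus \{c_m\}) \cup \{a_0\}$, the linear picks differ from $T'$ by swapping $a_0$ for $c_m$; a case analysis on whether $c_m$ lies inside the cyclic arc of $T'$ containing $t$ shows that either the linear gap equals this arc (if $c_m$ is outside) or $c_m$ splits the arc and the linear gap is a sub-interval of it (if $c_m$ is inside). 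In every case the required inequality holds, so $X_i \le N_\ell$ pointwise in the coupling, which gives the stochastic domination.

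The main obstacle will be the pointwise case analysis: one has to branch on whether $a_0 \in T'$, whether $t$ equals $a_0$ or some $c_k$ (and whether it coincides with a pick), and where $c_m$ (when present) sits relative to $t$ and the other picks. Each branch reduces to a short comparison of index differences, but enumerating them all requires care.
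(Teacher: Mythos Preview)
Your proposal is correct and follows essentially the same route as the paper: both bound $X_i$ by the length $Y_\ell$ of the gap in $T\cup\{a_0\}$ containing $t$, and then compare $Y_\ell$ to $N_\ell$ by exploiting that $N_\ell$ is defined with one fewer break point. The only difference is technical: the paper conditions on whether $a_0$ lies among the $\ell$ random picks for $N_\ell$ (handling the two cases $N_\ell\mid Q\ge Y_\ell$ and $N_\ell\mid\overline Q\sim Y_{\ell-1}\ge_{st}Y_\ell$ separately), whereas you build a single explicit coupling $T\mapsto T'$. Your coupling is correct, and in fact your pointwise comparison can be done in one line rather than the case analysis you anticipate: in both branches one has $T'\subseteq T\cup\{a_0\}$ (indeed $|T'|=\ell=|T\cup\{a_0\}|-1$), so the cyclic interval of $T'$ containing $t$ always contains the cyclic interval of $T\cup\{a_0\}$ containing $t$, which equals your linear gap since $a_0$ is a break point.
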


\begin{claim}\label{prop:circle}
For $2\le \ell \le n$,
\[
    E[N_\ell] \le 2 \frac{n+1}{\ell+1}.
\]
\end{claim}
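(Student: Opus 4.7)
My plan is to condition on whether the distinguished element $t$ is itself one of the chosen points $a_1, \ldots, a_\ell$, since the shape of the arc containing $t$ is different in the two cases. Writing $S$ for the random $\ell$-subset, if $t \in S$ then $t$ is the left endpoint of its interval, so $N_\ell$ equals the forward gap $F$ from $t$ to the next chosen point going cyclically. If $t \notin S$ then the interval extends in both directions from $t$, so $N_\ell = F + B$ where $B$ is the backward gap to the nearest chosen point.

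For each conditional expectation I would use the standard fact that the minimum of a uniformly random $k$-subset of $\{1, \ldots, N\}$ has expectation $(N+1)/(k+1)$. Conditional on $t \in S$, the remaining $\ell - 1$ chosen points form a uniform subset of the $n$ non-$t$ positions, and the forward distances from $t$ range over $\{1, \ldots, n\}$; this gives $E[N_\ell \mid t \in S] = (n+1)/\ell$. Conditional on $t \notin S$, the set $S$ is a uniform $\ell$-subset of the $n$ non-$t$ positions, so both $F$ and $B$ individually have expectation $(n+1)/(\ell+1)$, and linearity of expectation yields $E[N_\ell \mid t \notin S] = 2(n+1)/(\ell+1)$.

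To finish, I would observe that $(n+1)/\ell \le 2(n+1)/(\ell+1)$ whenever $\ell \ge 1$, so both conditional expectations are bounded by $2(n+1)/(\ell+1)$, and the unconditional expectation inherits this bound. In fact the same computation gives the exact value $E[N_\ell] = (2n+3-\ell)/(\ell+1)$, which is slightly smaller than the claimed bound. I do not anticipate any real obstacle; the only point requiring care is the bookkeeping when $t \in S$, where one must note that the backward gap effectively vanishes because $t$ itself serves as the left endpoint of its interval.
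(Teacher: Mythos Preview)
Your proposal is correct and follows essentially the same approach as the paper: both condition on whether $t$ is among the $\ell$ chosen points, obtain $E[N_\ell\mid t\in S]=(n+1)/\ell$ and $E[N_\ell\mid t\notin S]=2(n+1)/(\ell+1)$, and conclude by noting $1/\ell\le 2/(\ell+1)$. The only cosmetic difference is that, for the case $t\notin S$, the paper argues by drawing $\ell+1$ points and declaring one of them to be $t$ (so the containing arc is the union of two consecutive arcs), whereas you write $N_\ell=F+B$ and use linearity; these are equivalent computations of the same quantity.
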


The proof of these claims can be found in Appendix \ref{app:simpletg}.

We are ready to give an upper bound for $E_{\pi}[X_i(s,\pi)]$ using $Pr[L(i,\pi)=l] = \frac{1}{n+1}$ for every $0\le l\le n$ by the uniformity of $\pi$.
Observe that for $i \leq n$
\begin{align}\label{X-bound}
E_{\pi}[X_i(s,\pi)] &
    \le \sum_{\ell=0}^{n} E_{\pi}[X_i(s,\pi) \mid L(i,\pi)=l]\cdot Pr[L(i,\pi)=\ell\,] \nonumber \\
    & \le \sum_{l=0}^1 \frac{n+1}{n+1} + \sum_{\ell=2}^{n} E[N_\ell]\cdot \frac{1}{n+1} \leq 2+ \sum_{\ell=2}^{n} \frac{2}{\ell+1} \le 2\log (n+1) +3.
\end{align}
An analogous argument for $i > n$ considers a chain $W=W_i$ and shows that (\ref{X-bound}) holds when $i>n$ as well.
Applying Corollary~\ref{cor:main} with $E[X_i(s,\pi)]\leq 2\log (n+1) +3$ gives
\begin{align*}
    TG(n) &\leq  \prod_{i=1}^{2n} \max_{s \in S} E_{\pi}[X_i(s,\pi)] \le  (2\log (n+1) +3)^{2n} \leq e^{2n\log\log n+O(n)}.
\end{align*}

This is worse than the bound given in \cite{KGW}, but better than the prior best bound of $2^{n\log n-O(n)}$ from \cite{St}.

\subsection{Improved combinatorial bound for $TG(n)$}\label{sec:tg}

To get a better bound for $|S|$, we apply Lemma~\ref{lem:main} using $N_\ell$ as defined above. 

\begin{claim}\label{prop:PrYisk}
    For $\ell \geq 2$ and any $1\le k\le n$
    \[
    Pr[N_\ell=k]= k\frac{\binom{n-k}{\ell-2}}{\binom{n+1}\ell}.
    \]
\end{claim}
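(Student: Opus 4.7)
The plan is a direct combinatorial count. For each $k\in\{1,\dots,n\}$, I will count the $\ell$-subsets of the $n+1$ cyclic positions whose gap containing $t$ has length exactly $k$, then divide by the total $\binom{n+1}{\ell}$.

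The central observation I will use is that a gap $[a_j,a_{j+1})$ of length $k$ containing $t$ is uniquely specified by its starting picked point $a_j$, and $a_j$ must be one of the $k$ cyclic positions $t,\,t-1,\,\ldots,\,t-(k-1)$. Indeed, either $a_j=t$ (the case when $t$ itself is picked), or $a_j$ is one of the $k-1$ positions strictly preceding $t$ inside the gap. Once $a_j$ is chosen, $a_{j+1}$ is forced to lie exactly $k$ positions ahead, and the remaining $\ell-2$ picked points must lie outside the closed interval $[a_j,a_{j+1}]$, which comprises $k+1$ positions; the complement has $n+1-(k+1)=n-k$ positions, giving $\binom{n-k}{\ell-2}$ choices. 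Multiplying gives $k\binom{n-k}{\ell-2}$ favorable $\ell$-subsets, and dividing by $\binom{n+1}{\ell}$ yields the stated formula.

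I do not expect a substantive obstacle; the argument is just careful bookkeeping. The only subtle point is that the parametrization by $(a_j,a_{j+1})$ produces each favorable $\ell$-subset exactly once, which follows since a given $\ell$-subset has a unique gap containing $t$ and the endpoints of that gap are then determined. As a sanity check, one can verify $\sum_{k=1}^n k\binom{n-k}{\ell-2}=\binom{n+1}{\ell}$ directly via the hockey-stick identity, or simply observe this must hold a posteriori from $\sum_k \Pr[N_\ell=k]=1$.
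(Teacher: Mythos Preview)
Your proof is correct and follows essentially the same approach as the paper: count the $k$ possible length-$k$ cyclic intervals containing $t$, fix their two endpoints as picked points, and choose the remaining $\ell-2$ picked points from the $n+1-(k+1)=n-k$ positions outside. Your write-up is somewhat more detailed (explicit justification of the $k$ starting positions, the uniqueness remark, and the sanity check via summation), but the underlying combinatorial count is identical to the paper's.
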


\begin{proof}
    There are  $k$ potential intervals $[a_j,a_{j+1})$  of length $k$ on $C$ that contain $t$.
    Since the two ends $a_j$ and $a_{j+1}$ need to be two of the $\ell$ known elements from $a_1,\dots, a_\ell$, there are $\binom{n+1-(k+1)}{\ell-2}$ options for the remaining $\ell-2$ known elements from $a_1,\dots, a_\ell$.
\end{proof}

\begin{claim}\label{prop:logY}
For $n \geq 1$, $i \leq n$ and any $s\in S$,
    \[E_{\pi}[\log X_i(s,\pi)]\le 1.2038.\]
\end{claim}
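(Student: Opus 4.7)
The plan is to bound $E_\pi[\log X_i(s,\pi)]$ by conditioning on the value of $L(i,\pi)$ and applying the stochastic dominance from Claim~\ref{prop:domination}. Since $\pi$ is uniform, $Pr[L(i,\pi)=\ell] = 1/(n+1)$ for every $\ell \in \{0, 1, \ldots, n\}$. For the two trivial cases $\ell \in \{0, 1\}$ (where very little information is revealed before $M_i$), I would use the bound $X_i(s,\pi) \leq n+1$, which contributes $2\log(n+1)/(n+1)$ in total. For $\ell \geq 2$, the stochastic dominance $X_i \leq N_\ell$ combined with monotonicity of $\log$ yields $E_\pi[\log X_i(s,\pi) \mid L(i,\pi)=\ell] \leq E[\log N_\ell]$, and Claim~\ref{prop:PrYisk} gives the distribution of $N_\ell$ explicitly.

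The key algebraic step is to swap the order of summation in
\[
\sum_{\ell=2}^n E[\log N_\ell] = \sum_{k \geq 2} k \log(k) \sum_{\ell=2}^n \frac{\binom{n-k}{\ell-2}}{\binom{n+1}{\ell}}
\]
and then evaluate the inner sum in closed form. I would use the beta-integral representation $1/\binom{n+1}{\ell} = (n+2)\int_0^1 x^\ell (1-x)^{n+1-\ell}\,dx$; pulling out $x^2(1-x)^{n-1}$ and applying the binomial theorem gives $\sum_m \binom{n-k}{m}(x/(1-x))^m = (1-x)^{-(n-k)}$ for $k \geq 2$, so that the integral reduces to $(n+2)\int_0^1 x^2(1-x)^{k-1}\,dx = 2(n+2)/(k(k+1)(k+2))$. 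Combining, I expect to obtain
\[
E_\pi[\log X_i(s,\pi)] \leq \frac{2\log(n+1)}{n+1} + \frac{2(n+2)}{n+1}\sum_{k=2}^{n}\frac{\log k}{(k+1)(k+2)}.
\]

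The final step is to verify that this upper bound is at most $1.2038$ for every $n \geq 1$. As $n \to \infty$ it converges to $2\sum_{k=2}^\infty \log k/((k+1)(k+2))$, which is numerically approximately $1.2036$. For a rigorous verification I would compute the partial sum to some moderate $N$ (say $N = 200$) and bound the remaining tail via $\sum_{k > N}\log k/((k+1)(k+2)) \leq \int_N^\infty \log x/x^2\,dx = (\log N + 1)/N$. A short monotonicity argument (or direct check for small $n$, where the displayed expression is well below the target) then confirms that the finite-$n$ bound does not exceed its limit, for instance by establishing the equivalent inequality $(n+1)\cdot\mathrm{tail}(n) \geq \log(n+1) + \sum_{k=2}^{n}\log k/((k+1)(k+2))$.

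The main obstacle is the closed-form evaluation of the inner sum via the beta integral; once that is in hand, the remaining task is numerical. The bound is tight (with margin only about $0.0002$), so the tail estimate for the infinite series must be carried out with moderate care.
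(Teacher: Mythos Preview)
Your proposal is correct and follows essentially the same route as the paper: condition on $L(i,\pi)$, use the trivial bound for $\ell\in\{0,1\}$, apply stochastic dominance and Claim~\ref{prop:PrYisk} for $\ell\ge 2$, swap the order of summation, and arrive at the same closed-form bound $\frac{2\log(n+1)}{n+1}+\frac{2(n+2)}{n+1}\sum_{k=2}^n\frac{\log k}{(k+1)(k+2)}$. The one substantive difference is how you evaluate the inner sum $\sum_{\ell=2}^{n}\binom{n-k}{\ell-2}/\binom{n+1}{\ell}$: the paper invokes the Whitworth Identity (Lemma~\ref{lem:whitworth}) with $a=2$, whereas you use the beta-integral representation $1/\binom{n+1}{\ell}=(n+2)\int_0^1 x^\ell(1-x)^{n+1-\ell}\,dx$ and the binomial theorem --- both yield $2(n+2)/(k(k+1)(k+2))$, and your derivation is arguably more self-contained. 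Your final numerical step is also more explicit than the paper's, which simply asserts that the bound is easy to confirm numerically.
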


The proof of Claim \ref {prop:logY} follows from Claims \ref{prop:domination} and \ref{prop:PrYisk} with a calculation that is similar to (\ref{X-bound}) but is more involved; the details can be found in Appendix \ref{app:logY}.

As in Section \ref{sec:simpletg}, an analogous argument shows that Claim~\ref{prop:logY} holds for $n+1 \leq i \leq 2n$ as well.
Therefore, applying Lemma~\ref{lem:main} to $S \subset M_1 \times \cdots \times M_n \times W_{n+1} \times \cdots \times W_{2n}$  gives
\[
\log |S| \le \sum_{i=1}^{2n} \max_{s\in S} E_{\pi}[\log X_i(s,\pi)] \le 1.2038\cdot 2n = 2.4076n
\]
which implies $TG(n)\le |S| \le e^{2.4076n}\leq 11.11^n$.


\subsection{Asymptotic upper bound for $TG(n)$}\label{sec:asympt}

In the previous sections we have given exact formulas, but in fact, these are unnecessary, as when $n\to \infty$, instead of dealing with $\binom{n+1}l$ options, we can pick each element with probability $x=\frac ln$.
More precisely, let us define the random variable $N_x$ as follows.

\begin{defi}\label{def:Nx}
Let $A$ be a random subset of the integers such that for every $j\in \mathbb Z$ $Pr[j\in A]=x$, independently. 
Let $N_x$ be the length of the shortest interval formed by the elements of $A$ that contains 0, i.e., $N_x=\min_{b\in A, b>0} b - \max_{a\in A, a\le 0} a.$
\end{defi}

Then we have $Pr[N_x=k]=kx^2(1-x)^{k-1}$ because there are $k$ intervals of length $k$ that contain 0, its two endpoints need to be in $A$, and its $k-1$ intermediate points should not be in $A$.
We can now perform an analogous calculations to that of (\ref{eq1}) as follows.
\begin{align}\label{eqnew}
    E_{\pi}[ \log X_i(s,\pi)] &= \sum_{\ell=0}^{n} E_{\pi}[ \log X_i(s,\pi) \mid L(i,\pi) = l] Pr[L(i,\pi) = l] \nonumber \\
    & \le o(1) + \int_0^1 E[\log N_x] \mathop{dx} \nonumber \\
    &= o(1) + \int_0^1 \sum_{k} (\log k) Pr[N_x=k] \mathop{dx} \nonumber \\ 
    &= o(1) + \int_0^1 \sum_{k} (\log k) k x^2(1-x)^{k-1} \mathop{dx} \nonumber \\ 
    &=o(1) +\sum_{k} \frac{2\log k}{(k+1)(k+2)}\le 1.2038+o(1).
\end{align}

Because of the $o(1)$, this gives a slightly weaker bound $TG(n)\le 11.11^n+O(1)$.

\section{Improved bound for $SM(n)$}\label{sec:tg2}

In this section we bound the number of downsets in the rotation poset which, by Theorem~\ref{bijection-thm}, will give a bound on $SM(n)$. Our approach is similar to that in Section~\ref{sec:simple}, but now  instead of using the tangled grid poset, will focus on the rotation poset directly.

\subsection{Properties of the rotation poset}

First we derive some further properties of the rotation poset.

\begin{claim}\label{prop:two}
    The number of m-chains containing
     rotation $\rho \in \mathcal{R}$ is equal to the number of w-chains containing $\rho$, and is at least $2$.
\end{claim}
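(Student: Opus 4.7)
The plan is to read off both quantities directly from the anatomy of a rotation and check that they coincide. Recall that by definition a rotation is
\[
\rho = (u_0v_0, u_1v_1,\dots, u_{k-1}v_{k-1})
\]
with $k \ge 2$, and these edges all belong to some stable matching $\mu$. First, I will observe that the jobs $u_0,\ldots,u_{k-1}$ are pairwise distinct, and likewise the applicants $v_0,\ldots,v_{k-1}$ are pairwise distinct: indeed, each $u_i v_i$ is an edge of the perfect matching $\mu$, so no job (and no applicant) can be incident to two of them. Therefore $\rho$ involves exactly $k$ jobs and exactly $k$ applicants.

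Next, I will unpack the definition of the m-chains and w-chains: $M(u)$ contains $\rho$ iff the job $u$ is involved in $\rho$, and symmetrically $W(v)$ contains $\rho$ iff the applicant $v$ is involved in $\rho$. Combining this with the previous step, the number of m-chains containing $\rho$ equals the number of jobs involved in $\rho$, which is $k$; the same argument gives $k$ for w-chains. Hence the two counts are equal, and the common value is the number $k\ge 2$ of edges of the rotation.

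There is no real obstacle here: both parts of the statement — equality and the lower bound $2$ — reduce to the definitional facts that $\rho$ has $k\ge 2$ edges and that these edges belong to a common matching, so the involved jobs and applicants each form a set of size exactly $k$. The only care needed is in verifying distinctness of the $u_i$'s and $v_i$'s from the matching property before counting the chains.
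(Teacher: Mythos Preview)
Your argument is correct and is essentially the same as the paper's: both simply read off from the definition of a rotation $\rho=(u_0v_0,\ldots,u_{k-1}v_{k-1})$ that $\rho$ lies in the $k$ m-chains $M(u_i)$ and the $k$ w-chains $W(v_i)$, with $k\ge 2$. Your version is slightly more explicit in justifying that the $u_i$'s (and the $v_i$'s) are pairwise distinct via the matching property, a point the paper leaves implicit.
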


\begin{proof}
    By Proposition~\ref{kgw-claim}, a rotation $\rho = (u_0v_0, u_1v_1,\dots, u_{k-1}v_{k-1})$ is contained in m-chains $M(u_i)$ and w-chains $W(v_i)$ for $0 \leq i \leq k-1$ where $k\ge 2$.
\end{proof}

 For a rotation $\rho = (u_0v_0, u_1v_1,\dots, u_{k-1}v_{k-1})$, let us
 {\it pair} up those m-chains and w-chains that contain $\rho$, in two ways:\\
 1) $M(u_i)$ and $M(v_i)$ are paired for every $i$, and\\
 2) $M(u_i)$ and $M(v_{i+1})$ are paired for every $i$.

\begin{claim}\label{prop:consecutive}
    Suppose that an m-chain $M(u)$ and a w-chain $W(v)$ are paired for some rotation $\rho$. Then either $M(u)$ and $W(v)$ are paired at exactly one other rotation $\rho'$ which is consecutive to $\rho$ on both $M(u)$ and $W(v)$, or $\rho$ is at the top or at the bottom of both $M(u)$ and $W(v)$.
    Moreover, none of $M(u)$ or $W(v)$ can be paired to another chain at both $\rho$ and $\rho'$.
\end{claim}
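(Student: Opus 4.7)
The plan is to analyze everything from the perspective of the single job $u$, tracking $u$'s partner across the sequence of rotation eliminations comprising $M(u)$. The key structural input is a monotonicity property already implicit in the rotation definition: since at every rotation each involved job $u_i$ strictly prefers $v_i$ (before) to $v_{i+1}$ (after), eliminating the rotations $\sigma_1 \prec \sigma_2 \prec \cdots \prec \sigma_t$ of $M(u)$ in order produces a strictly decreasing sequence of partners $w_0, w_1, \ldots, w_t$ for $u$ in $u$'s preference order, which are therefore \emph{pairwise distinct}. The two pairing types translate directly into this language: $M(u)$ is paired with $W(v)$ at $\sigma_k$ if and only if $v \in \{w_{k-1}, w_k\}$, with type (1) giving $v = w_{k-1}$ and type (2) giving $v = w_k$.

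With this reformulation the claim becomes almost purely combinatorial. Distinctness of the $w_i$ forces that there is at most one index $j$ with $v = w_j$, and the rotations pairing $M(u)$ with $W(v)$ are then exactly $\{\sigma_j, \sigma_{j+1}\} \cap \{\sigma_1, \ldots, \sigma_t\}$. For interior $0 < j < t$ this yields a pair of consecutive rotations on $M(u)$; for the boundary cases $j = 0$ or $j = t$ the pair collapses to a single rotation, $\sigma_1$ at the bottom or $\sigma_t$ at the top of $M(u)$. The ``moreover'' assertion is then immediate: any $W(v')$ paired with $M(u)$ at both $\sigma_j$ and $\sigma_{j+1}$ would, by the same characterization applied to $v'$, force $v' = w_j = v$; the symmetric statement for $W(v)$ follows by running the whole argument from $v$'s side.

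To upgrade ``consecutive on $M(u)$'' to ``consecutive on both $M(u)$ and $W(v)$'', I would repeat the identical analysis from $v$'s perspective, producing a distinct sequence $u_0, \ldots, u_s$ of partners of $v$ along $W(v)$'s rotations $\tau_1, \ldots, \tau_s$. The two characterizations describe the same subset of $\mathcal{R}$ --- precisely the rotations in $M(u) \cap W(v)$ --- so in the interior case the pairs match up as $\sigma_j = \tau_k$ and $\sigma_{j+1} = \tau_{k+1}$, consecutive on $W(v)$ by construction. The boundary cases align analogously: $v = w_0$ is equivalent to $u$ and $v$ being matched in $\mu_0$, which is equivalent to $u = u_0$, so the single common rotation sits at the bottom of both chains. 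I expect the main obstacle to be cleanly establishing the monotonicity (and hence distinctness) of $u$'s partners along $M(u)$ from the rotation definition together with the Gale--Irving structure \cite{GI,IL}; once that structural input is pinned down, the rest of the argument is a short combinatorial tally.
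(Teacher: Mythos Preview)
Your proposal is correct and follows essentially the same approach as the paper: both arguments rest on the monotonicity of $u$'s (and, symmetrically, $v$'s) partners along the chain of rotations, which forces each applicant to appear as $u$'s partner at most once and hence bounds the pairings to two consecutive rotations. The paper packages this a bit differently---invoking Proposition~\ref{one-pair} for the type-(1) bound and arguing directly that after a type-(2) pairing at $\rho$ the edge $uv$ must appear in the rotation immediately above on either chain---but your partner-sequence reformulation is just a more explicit unpacking of the same mechanism.
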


\begin{proof}
    By Proposition~\ref{one-pair}, there is at most one rotation where $uv$ appears as an edge. On the other hand, there is at most one rotation $\rho=(u_0v_0, u_1v_1,\dots, u_{k-1}v_{k-1})$
    such that $u=u_i$ and $v=v_{i+1}$. Indeed, after $\rho$ is eliminated, $u$ and $v$ will be matched, so any rotation $\rho'$ immediately above $\rho$ on $M(u)$ or $W(v)$ has $uv$ appearing as an edge.
    The `moreover' part of the statement follows from that each applicant gets a more preferred job after each rotation by definition.
\end{proof}



Unlike in the case of tangled grids, we cannot assume that all the m-chains and w-chains have the same fixed length, but from Claim \ref{prop:consecutive} we can conclude that the length of each chain is at most $n-1$.
Notice, however, that we never really relied on this assumption for tangled grids.
Indeed, the proof of Claim \ref{prop:logY} works for any other length. 
Our proof in this section also works for any length $m-1\le n-1$, in fact, our upper bound would be better for shorter chains.
We will not give a formal proof of this fact, because that would be cumbersome, but it is quite straight-forward to see that an $X_i(s,\pi)$ that belongs to a shorter chain $M$ is always at most as large in the stochastic order as some $X_i'(s,\pi)$ that could belong to some longer chain $M'$; it is enough to just add some further elements to $M$, and at the new elements pair $M'$ with some w-chains that were not paired with $M$ yet.
For simplicity, in the following we work with length $n-1$ instead of arbitrary $m-1\le n-1$.

Let us fix an m-chain $M=M_i$ whose elements are $\rho_1<\cdots<\rho_{n-1}$.
By Claim \ref{prop:consecutive}, for every $1\le j\le n-2$, there is a chain $W$ paired to $M$ both at $\rho_j$ and at $\rho_{j+1}$, this $W$ is not paired to $M$ anywhere else, and the elements $\rho_j$ and $\rho_{j+1}$ are also consecutive on $W$.
This means that if $\textrm{top}_D(W)=\rho_j$, then also $\textrm{top}_D(M)=\rho_j$.
In other words, if $W$ precedes $M$ according to $\pi$, then $X_i(s,\pi)=1$ and thus $\log X_i(s,\pi)=0$.
This will give a significant improvement of our upper bound.

We can also improve our upper bound by considering not only the intersections of $M$ with w-chains, but also with some m-chains.
Here we cannot define a nice pairing, and in fact the intersection structure of the m-chains can be quite complicated.
But from Claim \ref{prop:two} we know that every $\rho_j$ is contained in at least one other m-chain.
Moreover, for every $1\le j\le n-2$ there are two different m-chains, $M'\ne M$ and $M''\ne M$, such that $\rho_j\in M'$ and $\rho_{j+1}\in M''$.
This is because the w-chain $W$, paired to $M$ at $\rho_j$ and at $\rho_{j+1}$, is paired to another m-chain $M'$ at $\rho_j$ and a different m-chain $M''$ at $\rho_{j+1}$ by the `moreover' part of Claim \ref{prop:consecutive}.

Assuming $\mathrm{top}_D(M)=\rho_j$ for some $2\le j\le n-3$, consider 4 m-chains, each different from $M$, that contain $\rho_{j-1}$, $\rho_j$, $\rho_{j+1}$, $\rho_{j+2}$, respectively, and denote them by $M(-1)$, $M(0)$, $M(1)$, $M(2)$.
(It will be easy to see that in the case when $j=1,n-2$ or $n-1$ we get a better bound, but we will not discuss these cases in detail.)
Knowing $\textrm{top}_D(M(h))$ for some $-1\le h\le 2$ will determine whether $\rho_{j+h}\in D$ or not, which can also decrease the number of options left for $\textrm{top}_D(M)$.
We can assume $M(-1)\ne M(0)\ne M(1)\ne M(2)$ because of the above discussion, but it is possible that these 4 m-chains are not all different, e.g., $M(0)=M(2)$ is allowed.

We remark that considering more than 4 m-chains, i.e., increasing the range of the above $h$, would not improve our upper bound further in a straight-forward manner
because, with the natural extension of our above notation, we could have $M(1)=M(3)=M(5)=\ldots$ and $M(2)=M(4)=\ldots$, and in this case $\rho_{j+h}\notin D$ would automatically imply $\rho_{j+h+2}\notin D$, and we would not gain more information from having a larger intersection with the same chain above (or below) $\mathrm{top}_D(M)$.

\subsection{Final calculations}\label{sec:asympt2}

We will follow the analysis of Section \ref{sec:asympt}, as dealing with complicated sums of binomial coefficients, as in Section \ref{sec:tg}, would be very cumbersome, and would not improve the base of the exponent in our upper bound.

Denote the number of w-chains different from $W$ (paired to $M$ at $\mathrm{top}_D(M)$ and at the consecutive element) and preceding $M$ according to $\pi$ by $L(i,\pi)=|\{j\in [n+1,2n] \,:\, \pi^{-1}(j)<\pi^{-1}(i)\}, W_j\ne W|$. 
Each of these $L(i,\pi)$ w-chains are paired to $M$ at exactly two consecutive elements. 
We will give an upper bound for $X_i(s,\pi\mid x=L(i,\pi)n)$, that is, when we assume that there are $xn$ w-chains $W$ preceding $M$, with the help of the random variable $N_x$ defined as follows.

\begin{defi}\label{def:Nx2}
Let $A$ be a random subset of the integers such that for every $j\in \mathbb Z$ $Pr[j\in A]=x$, independently.
Let $B$ be a random subset of $\{-1,0,1,2\}$ such that for every $j\in \{-1,0,1,2\}$ $Pr[j\in B]=x$, independently. 
Let $N_x$ be the length of the shortest interval formed by the elements of $A$ and $B$ that contains 0, i.e., $N_x=\min_{b\in A\cup B, b>0} b - \max_{a\in A\cup B, a\le 0} a$, with probability $1-x$, and let $N_x=1$ with probability $x$.
\end{defi}

Before we proceed, we show that if we similarly define a random variable $N_x'$, with the only difference being that instead of $B$ we use $B'$, and certain events $j\in B'$ need not be independent, then $E[\log N_x']\le E[\log N_x]$.
More precisely, we still require for every $j\in \{-1,0,1\}$ the events $j\in B'$ and $j+1\in B'$ to be independent, but we require for other pairs $j,j'\in \{-1,0,1,2\}$ either that $j\in B'$ is independent from $j'\in B'$, or that $j\in B'$ if and only if $j'\in B'$.
We need this to later argue that $E_\pi[\log X_i(s,\pi)]$ is maximized when the above 4 m-chains are all different.

\begin{claim}\label{prop:Nx'}
 $E[\log N_x']\le E[\log N_x]$ for any $N_x'$ with the above properties.
\end{claim}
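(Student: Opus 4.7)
My plan is to peel off a zero-contribution term, reduce to a single-coupling comparison, and finish by an elementary case analysis.

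First, I would observe that the ``$N_x = 1$ with probability $x$'' clause is an independent coin flip contributing $x \log 1 = 0$ to both $E[\log N_x']$ and $E[\log N_x]$. It therefore suffices to show $E[\log L'] \le E[\log L]$, where $L = \min_{b \in A \cup B,\, b > 0} b \;-\; \max_{a \in A \cup B,\, a \le 0} a$ is the length of the shortest interval formed by $A \cup B$ that contains $0$, and $L'$ is the analogous length from $A \cup B'$.

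Next, I would argue that under the stated constraints the joint distribution of $B' \cap \{-1, 0, 1, 2\}$ corresponds to a partition of $\{-1, 0, 1, 2\}$ into blocks of size at most two, where no block contains a consecutive pair, each block is a single $\mathrm{Bernoulli}(x)$ coin flip, and distinct blocks are independent. A three-element block is excluded because by transitivity it would force at least one consecutive pair to be coupled. This leaves only five admissible partitions, obtained by optionally coupling any subset of the three non-consecutive pairs in $\{(-1, 1),\, (-1, 2),\, (0, 2)\}$ subject to the same transitivity obstruction. Since distinct blocks are independent, it suffices to prove: coupling a single non-consecutive pair $(j, j')$ (while keeping the other two positions as independent $\mathrm{Bernoulli}(x)$) does not increase $E[\log L']$; the full claim then follows by applying this inequality one block at a time.

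For a fixed pair $(j, j')$, I would condition on $A$ (fixing $r_A, l_A \ge 1$, the distances from $0$ to its nearest right/left neighbors in $A$) and on the statuses of the two remaining positions in $\{-1, 0, 1, 2\} \setminus \{j, j'\}$. Let $L_S$ denote the conditional interval length when $(j, j') \cap B' = S$. Under independence, the probabilities of $S = \emptyset, \{j\}, \{j'\}, \{j, j'\}$ are $(1-x)^2, x(1-x), x(1-x), x^2$, while under coupling they become $1-x, 0, 0, x$. Hence the conditional difference $E[\log L \mid \cdot] - E[\log L' \mid \cdot]$ equals
\[
x(1-x)\,\log\frac{L_{\{j\}}\, L_{\{j'\}}}{L_\emptyset\, L_{\{j, j'\}}},
\]
and the task reduces to the pointwise inequality $L_\emptyset \cdot L_{\{j, j'\}} \le L_{\{j\}} \cdot L_{\{j'\}}$ in every subcase.

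The main obstacle is the bookkeeping: three pairs times four conditionings gives twelve subcases. In the representative case $(j, j') = (-1, 1)$ with $0, 2 \notin B'$, the four lengths are $l_A + r_A$, $1 + r_A$, $l_A + 1$, and $2$, and the inequality reads $(1 + r_A)(l_A + 1) \ge 2(l_A + r_A)$, which rearranges to $(l_A - 1)(r_A - 1) \ge 0$ and holds since $l_A, r_A \ge 1$. Several subcases collapse to equalities (for instance, whenever $0 \in B'$ the left endpoint is pinned to $0$, rendering $-1 \in B'$ irrelevant), and every remaining subcase reduces to an inequality of the same flavor, where $r_A$ may be replaced by $R := \min(r_A, 2)$. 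Since all these inequalities hold, the claim follows.
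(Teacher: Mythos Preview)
Your proposal is correct and follows essentially the same route as the paper: peel off the ``$N_x=1$ with probability $x$'' coin (which contributes zero), condition on $A$ and on the $B$-statuses at the two positions \emph{not} being coupled, and then compare the independent versus fully-coupled distributions on the remaining pair. Your pointwise product inequality $L_\emptyset\, L_{\{j,j'\}} \le L_{\{j\}}\, L_{\{j'\}}$ is exactly the exponentiated form of the paper's Jensen step.

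The one difference is packaging. The paper observes that, after conditioning, the interval length always has the additive form $L = a_0 + X_1 + X_2$ with $X_i \in \{0, a_i\}$ and $a_i \ge 0$; concavity of $\log$ (Proposition~\ref{prop:4chan}) then gives the inequality in one stroke, and the product inequality becomes simply $a_1 a_2 \ge 0$. You instead verify the twelve subcases by hand, each time arriving at an inequality of the shape $(l-L)(r-R) \ge 0$---which is precisely $a_1 a_2 \ge 0$ in the paper's coordinates. So the paper's abstraction buys you the case analysis for free; your approach is more elementary but more laborious. Two small points: $l_A$ can be $0$ (when $0\in A$), not $\ge 1$ as you write, though that case yields equality and is harmless; and your ``one block at a time'' reduction for the double-coupling partition $\{-1,1\},\{0,2\}$ needs the conditioning argument to work for \emph{arbitrary} joint laws on the other two positions (not just independent Bernoullis), which it does since you condition on their realized values.
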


The proof of Claim \ref{prop:Nx'} can be found in Appendix \ref{app:Nx'}.

A random variable $X=X(n)$ is \emph{asymptotically at most $Y=Y(n)$ in the stochastic order} if for all $x$ we have $Pr[X\le x]\le Pr[Y\le x]+o(1)$, where $o(1)$ is a constant that tends to 0 as $n\to \infty$.

\begin{claim}\label{prop:domination2}
For $i \leq n$, $L(i,\pi)=\omega(1)$ and $L(i,\pi)n$ integer, $X_i(s,\pi\mid x=L(i,\pi)n)$ is asymptotically at most $N_x$, or at most some $N_x'$, in the stochastic order.
\end{claim}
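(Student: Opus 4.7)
The plan is to produce, from the random permutation $\pi$, an explicit coupling to the marker processes $A$ and $B$ of Definition~\ref{def:Nx2} such that $X_i(s,\pi)$ is pointwise dominated by the length of the resulting interval containing $0$. Fix $s\in S$ with $\mathrm{top}_D(M)=\rho_j$, where $M=M_i$ and $2\le j\le n-3$; the boundary cases $j\in\{1,n-2,n-1\}$ only improve the bound since the interval of uncertainty is truncated, so I would exclude them from the main argument.

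First I would catalogue every chain whose precedence over $M$ in $\pi$ reveals information about $\mathrm{top}_D(M)$. By Claim~\ref{prop:consecutive}, for each $k\in\{1,\ldots,n-2\}$ there is a unique w-chain $W_k$ paired to $M$ at the consecutive pair $(\rho_k,\rho_{k+1})$; its precedence pins down the status (in or out of $D$) of both $\rho_k$ and $\rho_{k+1}$, placing an edge-type marker at centered position $h=k-j$. The w-chain $W=W_j$ is the one excluded from $L(i,\pi)$: if it precedes $M$ then $X_i=1$, matching the ``$N_x=1$ with probability $x$'' branch of Definition~\ref{def:Nx2}. By Claim~\ref{prop:two}, for each $h\in\{-1,0,1,2\}$ there is an m-chain $M(h)\ne M$ containing $\rho_{j+h}$; its precedence determines whether $\rho_{j+h}\in D$, which imposes a one-sided constraint on $\mathrm{top}_D$ equivalent to an edge marker lying one step closer to $0$ on the centered axis.

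Second, I would translate each precedence event into a marker on the centered integer axis and observe that the number of remaining candidates for $\mathrm{top}_D(M)$ is at most the distance between the nearest left marker $a\le 0$ and right marker $b>0$, i.e.\ at most $b-a$. This is precisely the quantity $\min_{b\in A\cup B,\,b>0}b-\max_{a\in A\cup B,\,a\le 0}a$ appearing in Definition~\ref{def:Nx2}. Third, I would set up the probabilistic coupling: under a uniformly random $\pi$ conditioned on $L(i,\pi)/n=x$, each chain other than $M$ and $W$ precedes $M$ with marginal probability exactly $x$; moreover, for any fixed finite subcollection of chains, the joint precedence pattern is close in total variation to independent Bernoulli$(x)$ as $n\to\infty$, with error $o(1)$ provided $L(i,\pi)=\omega(1)$. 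This is the standard hypergeometric-to-binomial approximation, and the $\omega(1)$ hypothesis in the statement is precisely what makes this error negligible. Matching the Bernoulli$(x)$ precedence of each $W_k$ to the $A$-marker at position $h=k-j$, and the precedence of the four $M(h)$'s to the $B$-markers on $\{-1,0,1,2\}$, yields the desired coupling and hence the first case of the claim.

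Finally, I would handle dependencies among the $M(h)$'s. By the ``moreover'' clause of Claim~\ref{prop:consecutive}, consecutive $M(h)$ and $M(h+1)$ are always distinct, so their precedence events remain independent. Non-adjacent $M(h)$'s may coincide (e.g.\ $M(0)=M(2)$ is allowed), in which case their precedence events become fully correlated, meaning that either both chains precede $M$ or neither does. This is exactly the dependence structure permitted for $B'$ in Claim~\ref{prop:Nx'}: independence of adjacent pairs is preserved, and non-adjacent pairs are either independent or coincident. Thus when all four $M(h)$ are distinct we obtain $X_i\le N_x$ in stochastic order, and otherwise $X_i\le N_x'$ for a suitable $B'$, covering both options in the statement. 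The main obstacle will be to make the asymptotic independence quantitative: one must verify that conditioning on $L(i,\pi)=xn$ does not perturb the joint precedence distribution on any fixed-size subcollection of chains by more than $o(1)$ in total variation. This is a standard but slightly delicate exchangeability argument based on uniform sampling of permutations, and it is where the hypothesis $L(i,\pi)=\omega(1)$ enters essentially.
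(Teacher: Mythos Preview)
Your proposal is correct and follows essentially the same approach as the paper: identify the special w-chain $W$ at $(\rho_j,\rho_{j+1})$ giving the ``$N_x=1$ with probability $x$'' branch, map the remaining paired w-chains to the $A$-markers and the four m-chains $M(-1),M(0),M(1),M(2)$ to the $B$-markers, invoke asymptotic independence of precedence events as $n\to\infty$, and absorb possible coincidences among non-adjacent $M(h)$'s into the $N_x'$ alternative. Your indexing of marker positions is slightly imprecise (the paper places the $A$-marker at $h$ for $h\ge 1$ but at $h+1$ for $h\le -1$, and the $B$-marker from $M(h)$ sits at $h$ itself, not ``one step closer to $0$''), but this does not affect the argument's validity.
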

\begin{proof}
    Let $X_i(s,\pi\mid x=L(i,\pi)n)$ and suppose that $\textrm{top}_D(M)=\rho_j$.
    If the chain $W$ paired to $M$ at $\rho_j$ and $\rho_{j+1}$ precedes $M$, then $|X_i(s,\pi\mid x=L(i,\pi)n)|=1$.
    The probability for this is $x$, since we can generate the permutation $\pi$ by first taking a random permutation of $M$ and the w-chains that are different from $W$, and then insert $W$ (and the rest of the m-chains).
    
    Otherwise, thus with probability $1-x$, $M$ precedes $W$ and we proceed as follows.
    If the w-chain paired to $M$ at both $\rho_{j+h}$ and $\rho_{j+h+1}$  precedes $M$, then we put $h$ to $A$ if $h\ge 1$, and we put $h+1$ to $A$ if $h\le -1$.
    The probability of these events is asymptotically $x$ if $n\to \infty$ by a similar logic as we have used in case of $W$ above.
    
    Finally, fix 4 m-chains, each different from $M$, that contain $\rho_{j-1}$, $\rho_j$, $\rho_{j+1}$, $\rho_{j+2}$, respectively, and denote them by $M(-1)$, $M(0)$, $M(1)$, $M(2)$.
    If for some $-1\le h\le 2$ $M(h)$ precedes $M$, put $h$ to $B$.
    The probability of each of these events is also asymptotically $x$ if $n\to \infty$.
    
    Moreover, if $n\to \infty$, then any bounded collection of the above events is almost independent, meaning that if we replace them by independent events, then the probability that we get a different outcome is $o(1)$. 
    
    By definition, $X_i(s,\pi\mid x=L(i,\pi)n)$ is at most the distance between the two intersections of $M$ with earlier w-chains and the above 4 m-chains; the distance between the largest intersection that is at most $\rho_j$, and the smallest intersection that is larger than $\rho_j$.
    But this gives exactly the definition of $N_x$, or some $N_x'$, depending on which of the above 4 m-chains coincide.
\end{proof}

To repeat the calculation similar to (\ref{eqnew}), we first need to calculate $Pr[N_x=k]$ for $k\ge 2$.
(We do not need $Pr[N_x=1]$ because this will be multiplied by $\log N_x=0$.)
We introduce the notation $I=[\max_{a\in A\cup B, a\le 0} a,\min_{b\in A\cup B, b>0} b)$.
By the definition of $N_x$,
\begin{align*}
Pr[N_x=k]&=(1-x)\sum_{j\le 0} Pr[I=[j,j+h)]\\
&=(1-x)\sum_{j\le 0} Pr[j\in A\cup B]Pr[j+k\in A\cup B]\prod_{h=1}^{k-1} Pr[j+h\notin A\cup B].
\end{align*}

Using the above formula, we can calculate $Pr[N_x=k]$ as shown below.
We treat the cases $k=2$, $k=3$ and $k\ge 4$ separately; we have only provided all the steps in the straight-forward calculations for deriving the respective polynomials for $k=2$.
\begin{align*}
     Pr[N_x=2] &=(1-x)(Pr[I=[-1,1)]+Pr[I=[0,2)])\\
     &=(1-x)(Pr[-1\in A\cup B]Pr[1\in A\cup B]Pr[0\notin A,B]\\
     &~~~+Pr[0\in A\cup B]Pr[2\in A\cup B]Pr[1\notin A,B])\\
     &=(1-x)(2(1-(1-x)^2)(1-(1-x)^2)(1-x)^2)\\
     &=2(1-x)^3(1-(1-x)^2)^2.\\
    ~\\
     Pr[N_x=3] &=(1-x)(Pr[I=[-1,2)]+Pr[I=[-2,1)]+Pr[I=[0,3)])\\
     &=(1-x)^5(1-(1-x)^2)^2+2(1-x)^5(1-(1-x)^2)x.\\
    ~\\
    Pr[N_x=k\ge 4] &=(1-x)(Pr[I=[0,k)]+Pr[I=[-k+1,1)]\\
    &~~~+Pr[I=[-1,k-1)]+Pr[I=[-k+2,2)]
    +\sum_{-k+3\le h\le -2} Pr[I=[h,h+k)])\\
     &=2(1-x)^{k+2}(1-(1-x)^2)x+2(1-x)^{k+3}(1-(1-x)^2)x+(k-4)(1-x)^{k+4}x^2.
\end{align*}

Now we are ready to prove the upper bound for $E_\pi[ \log X_i(s,\pi)]$.

\begin{claim}\label{prop:logY2}
For any $i \leq n$ and $s\in S$, as $n\to \infty$
    \[E_\pi[\log X_i(s,\pi)]\le 0.6331+o(1).\]
\end{claim}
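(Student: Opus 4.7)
The plan is to imitate the asymptotic calculation of (\ref{eqnew}) in Section~\ref{sec:asympt}, replacing the random variable used there with the refined $N_x$ from Definition~\ref{def:Nx2} that encodes the additional rotation-poset structure established in this section. By Claim~\ref{prop:domination2}, the conditional quantity $X_i(s,\pi\mid x = L(i,\pi)/n)$ is asymptotically at most either $N_x$ or some $N_x'$ in the stochastic order, and Claim~\ref{prop:Nx'} reduces the $N_x'$ case to the $N_x$ case. Combining this with the fact that $L(i,\pi)/n$ is asymptotically uniform on $[0,1]$ (from the uniformity of $\pi$) yields
\[
E_\pi[\log X_i(s,\pi)] \leq \int_0^1 E[\log N_x] \, dx + o(1).
\]

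The next step is to compute, or at least bound, the integrand. Since $\log 1 = 0$, the probability-$x$ atom at $N_x = 1$ contributes nothing, and
\[
E[\log N_x] = \sum_{k \geq 2} (\log k)\, Pr[N_x = k].
\]
I would plug in the explicit formulas for $Pr[N_x=2]$, $Pr[N_x=3]$, and $Pr[N_x=k]$ ($k \geq 4$) derived just above the claim statement. After swapping sum and integral (justified by non-negativity), each of the resulting integrals $\int_0^1 (1-x)^a x^b \, dx$ is a standard Beta value $\frac{a!\,b!}{(a+b+1)!}$, so $\int_0^1 Pr[N_x=k] \, dx$ becomes an explicit rational function of $k$ with denominators of degree linear in $k$.

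The main obstacle is the infinite series $\sum_{k \geq 4} (\log k) \cdot r(k)$, which does not admit a clean closed form. Nevertheless, the coefficient $r(k)$ decays at least as fast as $1/k^3$ (each factor $(1-x)^{k+c}$ contributes a Beta factor of order $1/k^{c+1}$ after integration against a bounded polynomial in $x$), so the tail $\sum_{k \geq K} (\log k)\, r(k) = O((\log K)/K^2)$ is straightforward to control. I would evaluate the sum numerically by truncating at some moderate $K$ (say $K = 1000$), adding the exact contributions from $k = 2$ and $k = 3$, and verifying that the total is at most $0.6331$. This mirrors the corresponding computation in Section~\ref{sec:asympt} that produced the bound $1.2038$; the roughly factor-of-two improvement is accounted for by the extra structural information used in $N_x$, namely the probability-$x$ atom at $N_x = 1$ (from the paired w-chain $W$ that precedes $M$) together with the four additional constraint points from the m-chains $M(-1), M(0), M(1), M(2)$ built into the set $B$.
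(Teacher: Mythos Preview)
Your proposal is correct and follows essentially the same route as the paper: reduce to $\int_0^1 E[\log N_x]\,dx$ via Claims~\ref{prop:domination2} and~\ref{prop:Nx'}, expand using the explicit formulas for $Pr[N_x=k]$, integrate term by term using Beta integrals, and finish numerically. One minor slip worth flagging: the term $(k-4)(1-x)^{k+4}x^2$ in $Pr[N_x=k]$ carries a linear prefactor in $k$, so after integration the coefficient $r(k)$ decays only like $1/k^2$ (the paper obtains $\frac{2k(k+7)+72}{(k+3)(k+5)(k+6)(k+7)}$), making the tail $O((\log K)/K)$ rather than $O((\log K)/K^2)$; this does not affect the argument, as the series still converges absolutely and the truncation error at $K=1000$ is negligible.
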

\begin{proof}

\begin{align}\label{eqnew2}
    &E_\pi[ \log X_i(s,\pi)] = \sum_{\ell=0}^{n} E_\pi[ \log X_i(s,\pi) \mid x=L(i,\pi)n] Pr[x=L(i,\pi)n] \nonumber \\
    & \le o(1) + \int_0^1 E[\log N_x] \mathop{dx} \nonumber \\
    &= o(1) + \int_0^1 \sum_{k} (\log k) Pr[N_x=k] \mathop{dx} \nonumber \\ 
    &= o(1) + \int_0^1 (\log 2) 2(1-x)^3(1-(1-x)^2)^2 \nonumber \\ 
    &~~~+(\log 3) ((1-x)^5(1-(1-x)^2)^2+2(1-x)^5(1-(1-x)^2)x) \nonumber \\ 
    &~~~+ \sum_{k\ge 4} (\log k) (2(1-x)^{k+2}(1-(1-x)^2)x+2(1-x)^{k+3}(1-(1-x)^2)x+(k-4)(1-x)^{k+4}x^2) \mathop{dx} \nonumber \\ 
    &=o(1)+(\log 2)\frac{1}{12}+(\log 3)\frac{23}{630}+\sum_{k\ge 4} (\log k)\frac{2k(k+7)+72}{(k+3)(k+5)(k+6)(k+7)}\le 0.6331+o(1). \nonumber\qedhere
\end{align}
\end{proof}

As usual, an analogous argument holds for $n+1 \leq i \leq 2n$ as well.
Therefore, applying Lemma~\ref{lem:main} to $S \subset M_1 \times \cdots \times M_n \times W_{n+1} \times \cdots \times W_{2n}$ gives
\[
\log |S| \le \sum_{i=1}^{2n} \max_{s\in S} E_{\pi}[\log X_i(s,\pi)] \le (0.6331+o(1)) \cdot 2n = 1.2662n+o(n)
\]
which implies $SM(n)\le |S| \le e^{1.2663n}+O(1)\leq 3.55^n+O(1)$.%

\section{Concluding remarks}
Note that in an (untangled) $n \times n$ grid with its elements ordered as a `diamond' (with a unique smallest and largest element), the number of downsets is exactly $\binom{2n}n$.
This makes one think that possibly $TG(n)\le 4^n\cdot \text{poly}(n)$ holds, as was conjectured by the second author.
This, however, turned out to be false; Clay Thomas (personal communication) gave an example showing $TG(n)= \Omega(4.17^n)$ by modifying the construction from \cite{IL} showing $SM(n)= \Omega(2.28^n)$.

It would be nice if in our proofs one could avoid dealing with the m-chains and w-chains separately, and hope that with some tricks the number of components can be reduced this way from $2n$ to $n$.
This would imply that the base of the exponent could be reduced to its square root. Unfortunately, this is too good to be true for $TG(n)$, as the lower bound is larger than the square root of our upper bound, $11.11^n$.
Because of this, most likely for $SM(n)$ there is also no simple way to deal with only one kind of chains.


We do not believe our upper bounds to be tight; the truth is probably closer to the known lower bounds for both $TG(n)$ and $SM(n)$.
In fact, for $SM(n)$, our upper bound can be most likely improved by looking in more detail at the way other chains can intersect a given chain $M$.
With such a tedious analysis, however, it is unlikely that the lower bound can be matched.
Possibly some step by step modification of the rotation poset could show what it needs to be in the extremal case, and that could give a sharp bound.




\subsection*{Acknowledgments}

We would like to thank Padmini Mukkamala for pointing out Claim \ref{prop:circle}, Viktor Harangi and G\'abor Kun for discussions about other proofs of Lemma \ref{lem:main}, \'Agnes Cseh for calling our attention to relevant literature about stable matchings, and Bal\'azs Patk\'os for comments on the first version of this paper. This work has started while the first author visited the second at E\"otv\"os Lor\'and University, supported by grant number LP2017-19/2017 of the Lend\"ulet program of the Hungarian Academy of Sciences (MTA).

\appendix
\section{Proof of Lemma \ref{lem:main}}\label{app:main}

Define the entropy of a random variable $Z$ taking finitely many values as
\[H(Z)= - \sum_z Pr[Z=z]\cdot \log Pr[Z=z].\]

\begin{proof}[Proof of Lemma \ref{lem:main}]
    Fix an arbitrary permutation $\pi$ of $\{1,2,\dots,n\}$ and select
    $s\in S$ uniformly at random. From the basic properties of entropy (see \cite{R}) we have
	\begin{align*}
	  & \log |S| = H(s) 
	= \sum_{i=1}^n H(s_i \mid s_j \textit{ for } j \textit{ satisfying } \pi^{-1}(j)<\pi^{-1}(i)) \\ 
	& \le \sum_{i=1}^n H(s_i \mid X_i(s,\pi)) 
	= \!\sum_{i=1}^n \!\sum_k Pr_s[X_i(s,\pi)=k]\cdot H(s_i \mid X_i(s,\pi)=k) \\
	&\le \sum_{i=1}^n \sum_k Pr_s[X_i(s,\pi)=k] \cdot \log k= \sum_{i=1}^n E_s[\log X_i(s,\pi)].\qedhere
	\end{align*}
\end{proof}	

\section{Proofs of claims from Section \ref{sec:simpletg}}\label{app:simpletg}

\begin{proof}[Proof of Claim \ref{prop:domination}]
Let $a_1 \prec a_2 \prec \cdots \prec a_\ell$ be the intersections of $M$ with the $l$ revealed w-chains. The bottom element of $M$, $a_0$, satisfies $a_0 \prec a_1$ as no w-chain contains $a_0$. If we imagine the elements of $M$ arranged in the natural cyclic order around a cycle $C$, then $a_0 \prec a_1 \prec \cdots \prec a_{\ell}$ partition $C$ into $\ell+1$ intervals of the form
$[a_j,a_{j+1})$ (indexed modulo $\ell+1$).
As $a_0$ is in $D$, there is a maximum $j\geq 0$ such that $a_j$ is in $D$. Thus, 
$t$ must be on the interval $[a_j,a_{j+1})$ of $C$. Let $Y_\ell$ be the length of this interval. Observe that the number of options for $t$, i.e., $X_i(s,\pi\mid L(i,\pi) =l)$, is at most $Y_\ell$.
This allows us to ignore the structure of the poset $P^*$ and estimate $Y_l$.
For clarity, let us give an alternative definition of $Y_l$:

Take $n+1$ elements in a cyclic order, such that one of them is $a_0$, and one of them is $t$ (allowing $a_0=t$).
Pick $l$ distinct elements uniformly at random, indexed in circular order as $a_1, \ldots, a_\ell$ such that none of them is $a_0$ (but allowing $a_j=t$ for some $j$).
Let $Y_l$ be the length of the circular interval $[a_j,a_{j+1})$ (indexed modulo $l+1$).

Note that $Y_l$ depends on the relative position of $a_0$ and $t$.
Now it remains to show that $Y_l$ is at most $N_l$.
This is indeed the case because $N_l$ is essentially the same as $Y_l$, except that $a_0$ is not used when creating the intervals $[a_j,a_{j+1})$, so all these intervals are at least as long in case of $N_l$, as in case of $Y_l$.
The only issue is that it is possible that for some $j$ we have $a_j=a_0$, but in this case we get a distribution that is the same as $Y_{l-1}$, which is at least $Y_l$ in the stochastic order.

Let $Q$ be the event that $a_0 \neq a_j$ for all $1 \leq j \leq l$. Then
\begin{align*}
Pr[N_l\le x]&=Pr[N_l\le x\mid Q]Pr[Q]+Pr[N_l\le x \mid \overline{Q} ](1-Pr[Q])\\
&\ge Pr[Y_l\le x]Pr[Q] + Pr[Y_{l-1}\le x](1-Pr[Q])\\
&\ge Pr[Y_l\le x]Pr[Q] + Pr[Y_{l}\le x](1-Pr[Q])=Pr[Y_l\le x].\qedhere
\end{align*}

\end{proof}

\begin{proof}[Proof of Claim \ref{prop:circle}]
We will bound $E[N_\ell \mid t\ne a_j \, \forall 1\le j\le l]$ and $E[N_\ell \mid \exists j: t=a_j]$ separately.

First, select $\ell+1$ elements $a'_0 \prec a'_1 \prec \cdots \prec a'_{\ell}$ uniformly at random from all $n+1$ elements of $C$. This partitions $C$ into $\ell+1$ intervals of the form $[a'_j,a'_{j+1})$.
The expected length of each of these intervals is exactly $\frac{n+1}{\ell+1}$.
Now, pick one of the $a_{j_0}'$ uniformly at random, declare $t=a_{j_0}'$ and reindex the rest of the $a_j'$ into $a_1,\ldots,a_l$.
This generates the same distribution of $t,a_1,\ldots,a_l$ as in $N_l$ given $t\ne a_j \, \forall 1\le j\le l$, so the length of the interval $[a_j,a_{j+1})$ containing $t$ will be $E[N_\ell \mid t\ne a_j \, \forall 1\le j\le l]$.
On the other hand, it is also the union of two intervals, $[a_{j_0-1}',a_{j_0}')$ and $[a_{j_0}',a_{j_0+1}')$, thus its expected length is $2\frac{n+1}{\ell+1}$.
This proves $E[N_\ell \mid t\ne a_j \, \forall 1\le j\le l]=2\frac{n+1}{\ell+1}$.

The other case can be computed as follows.
$E[N_\ell \mid \exists j_0: t=a_{j_0}]$ is the same as the expected value of the smallest of $l-1$ different numbers picked uniformly at random from $[1,n]$.
These divide the $n+1$ elements into $l$ equal intervals, or we can compute the expectation as follows.
\[
E[N_\ell \mid \exists j_0: t=a_{j_0}]=\sum_{k=1}^{n-l+2} Pr[\forall j\ne j_0: a_j\ge k]
=\sum_{k=1}^{n-l}\frac{\binom{n-k+1}{l-1}}{\binom{n}{l-1}}
=\frac{\binom{n+1}{l}}{\binom{n}{l-1}}=\frac{n+1}{\ell}.
\]

If $l\ge 1$, then $\frac1l\le \frac2{l+1}$, so both
$E[N_\ell \mid t\ne a_j \, \forall 1\le j\le l]$ and $E[N_\ell \mid \exists j: t=a_j]$ are less than $2\frac{n+1}{\ell+1}$, thus so is $E[N_\ell]$.
\end{proof}

\section{Proof of Claim \ref{prop:logY}}\label{app:logY}

\begin{proof}[Proof of Claim \ref{prop:logY}]
\begin{align}\label{eq1}
    E_{\pi}[ \log X_i(s,\pi)] &= \sum_{\ell=0}^{n} E_{\pi}[ \log X_i(s,\pi) \mid L(i,\pi) = l] Pr[L(i,\pi) = l] \nonumber \\
    & \le \sum_{l=0}^1 \frac{\log(n+1)}{n+1} + \sum_{\ell=2}^{n} E[\log N_\ell] \frac{1}{n+1} \nonumber \\
    &\le \frac {2\log (n+1)}{n+1}  +\frac 1{n+1} \sum_{\ell=2}^{n} \sum_k k \frac{\binom{n-k}{\ell-2}}{\binom {n+1}\ell}\log k \nonumber \\ 
    &=\frac {2\log (n+1)}{n+1} +\frac 1{n+1}\sum_{k=2}^n k\log k \sum_{\ell=2}^{n} \frac{\binom{n-k}{\ell-2}}{\binom {n+1}\ell}.
\end{align}

To bound the inner sum above we use the following identity.

\begin{lem}[Whitworth Identity \cite{W}]\label{lem:whitworth}
If $m\ge 0$ and $n\ge m+a$, then
\[
\sum_{j=0}^m \frac{\binom{m}{j}}{\binom{n}{j+a}} = \frac{n+1}{(a+1)\binom{n-m+1}{a+1}}.
\]
\end{lem}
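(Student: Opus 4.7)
The plan is to prove the identity by induction on $m$, using Pascal's rule to reduce the case $m$ to two instances of the case $m-1$, and then verifying a short factorial identity to recombine them.

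\textbf{Base case.} For $m=0$, the only surviving term is $j=0$, so the LHS equals $1/\binom{n}{a}$. The RHS is $\frac{n+1}{(a+1)\binom{n+1}{a+1}}$, and these agree because $(a+1)\binom{n+1}{a+1} = (n+1)\binom{n}{a}$.

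\textbf{Inductive step.} Write $S(m,a,n)$ for the LHS. Applying Pascal's rule $\binom{m}{j}=\binom{m-1}{j}+\binom{m-1}{j-1}$ and reindexing the second sum by $k=j-1$ gives
\[
S(m,a,n) = \sum_{j=0}^{m-1}\frac{\binom{m-1}{j}}{\binom{n}{j+a}} + \sum_{k=0}^{m-1}\frac{\binom{m-1}{k}}{\binom{n}{k+(a+1)}} = S(m-1,a,n) + S(m-1,a+1,n).
\]
Both of these are covered by the inductive hypothesis since $n\ge m+a$ implies both $n\ge (m-1)+a$ and $n\ge (m-1)+(a+1)$. Substituting the formula yields
\[
S(m,a,n) = \frac{n+1}{(a+1)\binom{N}{a+1}} + \frac{n+1}{(a+2)\binom{N}{a+2}}, \qquad N := n-m+2.
\]
It then suffices to verify the purely algebraic identity
\[
\frac{1}{(a+1)\binom{N}{a+1}} + \frac{1}{(a+2)\binom{N}{a+2}} = \frac{1}{(a+1)\binom{N-1}{a+1}}.
\]
Expanding via $(a+1)\binom{N}{a+1} = \frac{N!}{a!(N-a-1)!}$, $(a+2)\binom{N}{a+2}=\frac{N!}{(a+1)!(N-a-2)!}$, and $(a+1)\binom{N-1}{a+1}=\frac{(N-1)!}{a!(N-a-2)!}$, and pulling out $\frac{a!(N-a-2)!}{N!}$, the check collapses to $(N-a-1)+(a+1) = N$, which is a tautology.

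\textbf{Main obstacle and alternative.} There is no real conceptual obstacle; the only care needed is to keep factorials and index shifts straight through the reindexing. If one wishes to bypass the case analysis entirely, an equally clean proof uses the Beta-function representation $\frac{1}{\binom{n}{k}} = (n+1)\int_0^1 x^k(1-x)^{n-k}\,dx$ for $0\le k\le n$: substituting with $k=j+a$, factoring $x^a(1-x)^{n-m-a}$ outside the sum, and applying the binomial theorem to $\sum_j\binom{m}{j}x^j(1-x)^{m-j}=1$ reduces the LHS to $(n+1)\int_0^1 x^a(1-x)^{n-m-a}\,dx = (n+1)\cdot\frac{a!(n-m-a)!}{(n-m+1)!}$, which matches the RHS after one line of factorial manipulation.
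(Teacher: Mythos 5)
Your proof is correct, and it takes a genuinely different route from the paper's. You prove the identity by induction on $m$: Pascal's rule gives the recursion $S(m,a,n)=S(m-1,a,n)+S(m-1,a+1,n)$ (note that since the step shifts $a$ to $a+1$, the inductive hypothesis must be quantified over all admissible $a$, which you implicitly do and which is fine because $n\ge m+a$ covers both instances), and the recombination reduces to the one-line factorial check $(N-a-1)+(a+1)=N$; your base case and the algebra all check out, including nonvanishing of every binomial in sight under $n\ge m+a$. The paper instead gives a purely combinatorial, induction-free argument: it counts multiset selections to get a Vandermonde-type convolution $\binom{a+b+m+1}{m}=\sum_j\binom{a+j}{j}\binom{b+m-j}{m-j}$, then double-counts four-symbol strings to trade $\binom{a+j}{j}\binom{b+m-j}{m-j}$ for $\binom{m}{j}/\binom{a+b+m}{a+j}$, and rearranges with $n=a+b+m$. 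The paper's route exhibits the identity as a disguised Vandermonde convolution and avoids any recursion; your induction is more mechanical and self-contained, and your Beta-integral alternative, $1/\binom{n}{k}=(n+1)\int_0^1 x^k(1-x)^{n-k}\,dx$ combined with $\sum_j\binom{m}{j}x^j(1-x)^{m-j}=1$, is arguably the shortest of the three and generalizes readily beyond integer parameters.
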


As the above identity appeared as an exercise in a book published over 100 years ago \cite{W}, we provide a proof below. 

Using the Whitworth Identity with $a=2$ we get
\begin{align*}
 &\sum_{k=2}^n k\log k \sum_{\ell=2}^{n} \frac{\binom{n-k}{\ell-2}}{\binom {n+1}\ell}
 = \sum_{k=2}^n k \log k \sum_{j=0}^{n-k} \frac{\binom{n-k}{j}}{\binom{n+1}{j+2}}\\
 &= \sum_{k=2}^n k \log k \left(\frac{n+2}{3 \binom{k+2}{3}} \right) = 2(n+2) \sum_{k=2}^n \frac{\log k}{(k+1)(k+2)}. 
\end{align*}

Putting this back to Inequality (\ref{eq1}), we obtain 
\begin{align*}
    E_{\pi}[\log X_i(s,\pi)]&\le \frac 2{n+1} \log (n+1) + 2\frac {n+2}{n+1} \sum_{k=2}^n \frac{\log k}{(k+1)(k+2)}.
\end{align*}

From here it is easy to confirm numerically that $E_{\pi}[ \log X_i(s,\pi)] \leq 1.2038$ for every $n\geq 1$.
\end{proof}




\begin{proof}[Proof of the Whitworth Identity]
    First count the number of ways to choose $m$ objects with repetition from the union of sets of sizes $a+1$ and $b+1$, respectively, to establish
    \begin{equation}\label{VDM-id}
       \binom{a+b+m+1}{m} = \sum_{j=0}^{m} \binom{a+j}{j} \binom{b+m-j}{m-j}.
    \end{equation}

    Now let us count the number of strings of length $a+b+m$ using exactly $a$ zeros, $m-j$ ones, $j$ twos and $b$ threes in two ways:
    \begin{equation}\label{three-count}
        \binom{a+b+m}{a+m}\binom{a+m}{m} \binom{m}{j} = \binom{a+b+m}{a+j}\binom{a+j}{j} \binom{b+m-j}{m-j}.    
    \end{equation}
    
    Rearranging terms in (\ref{three-count}) and substituting into (\ref{VDM-id}) gives
    \begin{align*}      
    \binom{a+b+m+1}{m} &= \sum_{j=0}^{m} \frac{\binom{a+b+m}{a+m}\binom{a+m}{m}\binom{m}{j}}{\binom{a+b+m}{a+j}}\\ &=
    \binom{a+b+m}{a+m}\binom{a+m}{m}\sum_{j=0}^{m} \frac{\binom{m}{j}}{\binom{a+b+m}{a+j}}.
    \end{align*}
    Now making the substitution $n=a+b+m$ and rearranging terms gives
    $$
    \sum_{j=0}^m \frac{\binom{m}{j}}{\binom{n}{j+a}} = \frac{\binom{n+1}{m}}{\binom{n}{a+m}\binom{a+m}{m}} = \frac{n+1}{(a+1)\binom{n-m+1}{a+1}}
    $$
    where the last equality comes from simplification.
\end{proof}

\section{Proof of Claim \ref{prop:Nx'}}\label{app:Nx'}

\begin{proof}[Proof of Claim \ref{prop:Nx'}]
For the proof, we need the following simple inequality.

\begin{prop}\label{prop:4chan}
    Let $a_0,a_1,a_2>0$.
    Suppose that for two independent random variables, $X_1$ and $X_2$, we have $Pr[X_1=0]=Pr[X_2=0]=x$ and $Pr[X_1=a_1]=Pr[X_2=a_2]=(1-x)$.
    Suppose that the random variables $X_1'$ and $X_2'$ have the same distribution as $X_1$ and $X_2$, respectively, but they completely determine each other, i.e., $X_1'=0$ if and only if $X_2'=0$.
    Then \[E[\log(a_0+X_1+X_2)]\ge E[\log(a_0+X_1'+X_2')].\]
\end{prop}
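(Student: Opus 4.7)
The plan is to prove Proposition \ref{prop:4chan} by a direct computation followed by reduction to a trivial algebraic inequality. Since $X_1, X_2$ are independent with the stated marginals, the joint distribution of $(X_1, X_2)$ puts mass $x^2$, $x(1-x)$, $(1-x)x$, $(1-x)^2$ on $(0,0)$, $(0,a_2)$, $(a_1,0)$, $(a_1,a_2)$ respectively. For the comonotone pair $(X_1', X_2')$, the constraint $X_1'=0 \iff X_2'=0$ together with the matching marginals forces the joint law to put mass $x$ on $(0,0)$ and mass $1-x$ on $(a_1,a_2)$, with no mass on the mixed outcomes.

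Writing $L_0 = \log a_0$, $L_1 = \log(a_0+a_1)$, $L_2 = \log(a_0+a_2)$, $L_{12} = \log(a_0+a_1+a_2)$, I would simply expand both expectations term by term. After grouping the $L_0$ and $L_{12}$ contributions (where the independent and comonotone cases disagree by exactly $x(1-x)$ in each), the difference collapses to
\[
E[\log(a_0+X_1+X_2)] - E[\log(a_0+X_1'+X_2')] = x(1-x)\bigl(L_1 + L_2 - L_0 - L_{12}\bigr).
\]

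It then remains only to verify that $L_1 + L_2 \ge L_0 + L_{12}$, which after exponentiating is equivalent to $(a_0+a_1)(a_0+a_2) \ge a_0(a_0+a_1+a_2)$. Expanding both sides gives $a_0^2 + a_0 a_1 + a_0 a_2 + a_1 a_2 \ge a_0^2 + a_0 a_1 + a_0 a_2$, i.e., $a_1 a_2 \ge 0$, which holds by the hypothesis $a_1, a_2 > 0$.

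There is no real obstacle in this argument; the only thing to be careful about is the bookkeeping in identifying the joint distributions in the independent versus the perfectly correlated case. Conceptually, the inequality is an instance of the fact that $(u,v) \mapsto \log(a_0 + u + v)$ is supermodular on $\{0,a_1\}\times\{0,a_2\}$ (one checks the four-point ``discrete Hessian'' inequality directly), so spreading mass out via the product (independent) coupling increases the expectation compared with concentrating it on the diagonal pair $\{(0,0),(a_1,a_2)\}$ via the comonotone coupling.
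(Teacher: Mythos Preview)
Your proof is correct and essentially identical to the paper's: both expand the two expectations, reduce the comparison to the single inequality $\log(a_0+a_1)+\log(a_0+a_2)\ge \log a_0+\log(a_0+a_1+a_2)$, and then verify it (the paper attributes it to concavity of $\log$, you expand $(a_0+a_1)(a_0+a_2)\ge a_0(a_0+a_1+a_2)$ directly to $a_1a_2\ge 0$). Your supermodularity remark is a nice conceptual gloss on the same four-point inequality.
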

\begin{proof}
This follows from the following application of Jensen's inequality for the $\log$ function.
\begin{align*}
    E[\log(a_0+X_1+X_2)]&=x^2\log a_0 + x(1-x)(\log(a_0+a_1)+\log(a_0+a_2)) + (1-x)^2\log(a_0+a_1+a_2)\\
    &\ge
    x^2\log a_0 + x(1-x)(\log a_0 +\log(a_0+a_1+a_2)) + (1-x)^2\log(a_0+a_1+a_2)\\
    &=x\log a_0 + (1-x)\log(a_0+a_1+a_2)
    =E[\log(a_0+X_1'+X_2')].\qedhere
\end{align*}
\end{proof}

Now we are ready to show $E[\log N_x']\le E[\log N_x]$. 
Suppose that, say, $0\in B'$ if and only if $2\in B'$, but $-1\in B'$ and $1\in B'$ are independent from all other events.
(The other cases go similarly.)
First, randomly determine for each $j\in\mathbb Z$ whether $j\in A$ or not, and also whether $-1,1\in B'$ or not, and let these sets be $A_0$ and $B_0$.
Notice that Proposition \ref{prop:4chan} with $a_0=2$, $a_1=\max_{a\in A_0\cup B_0, a\le 0}$, $a_2=\min_{b\in A_0\cup B_0, b>0} b$ gives
\[E[\log N_x'\mid A=A_0,B'\setminus\{-1,1\}=B_0]\le E[\log N_x\mid A=A_0,B\setminus\{-1,1\}=B_0].\]
By averaging over all possible $A_0,B_0$ with the appropriate weights, we get $E[\log N_x']\le E[\log N_x]$, finishing the proof of Claim \ref{prop:Nx'}.
\end{proof}

\end{document}